\theoremstyle{plain}
\newtheorem{thm}{\protect\theoremname}[section]
\theoremstyle{definition}
\newtheorem{defn}[thm]{\protect\definitionname}
\theoremstyle{definition}
\newtheorem{example}[thm]{\protect\examplename}
\theoremstyle{plain}
\newtheorem{prop}[thm]{\protect\propositionname}
\theoremstyle{remark}
\newtheorem{rem}[thm]{\protect\remarkname}
\theoremstyle{plain}
\newtheorem{lem}[thm]{\protect\lemmaname}
\providecommand{\definitionname}{Definition}
\providecommand{\examplename}{Example}
\providecommand{\lemmaname}{Lemma}
\providecommand{\propositionname}{Proposition}
\providecommand{\remarkname}{Remark}
\providecommand{\theoremname}{Theorem}
\begin{document}
$ $\thispagestyle{empty}

\[
\mbox{BAR-ILAN UNIVERSITY}
\]

$ $

$ $

\[
\mbox{\Large{\textbf{Enumeration of Tableaux of Unusual Shapes}}}
\]

$ $

\[
\mbox{Amir Shoan}
\]

$\mbox{}$

$ $

$ $

$ $

\[
\begin{array}{c}
\normalsize{\mbox{Submitted in partial fulfillment of the requirements for the Master's Degree }}\\
\mbox{\normalsize{\mbox{ in the Department of Mathematics, Bar-Ilan University}} }
\end{array}
\]

$ $

$ $

$ $

$ $

$ $

$ $

$ $

$ $

$ $

$ $

$ $

$ $

\[
\mbox{\normalsize\mbox{}Ramat-Gan, Israel \ensuremath{} \ensuremath{}\ensuremath{} \ensuremath{} \ensuremath{} \ensuremath{} \ensuremath{} \ensuremath{} \ensuremath{} \ensuremath{} \ensuremath{} \ensuremath{} \ensuremath{} \ensuremath{} \ensuremath{} \ensuremath{} \ensuremath{} \ensuremath{} \ensuremath{} \ensuremath{} \ensuremath{} \ensuremath{} \ensuremath{} \ensuremath{} \ensuremath{} \ensuremath{} \ensuremath{} \ensuremath{} \ensuremath{} \ensuremath{} \ensuremath{} \ensuremath{} \ensuremath{} \ensuremath{} \ensuremath{} \ensuremath{} \ensuremath{} \ensuremath{} \ensuremath{} \ensuremath{}\ensuremath{} \ensuremath{} \ensuremath{} \ensuremath{} \ensuremath{} \ensuremath{} \ensuremath{} \ensuremath{} \ensuremath{} \ensuremath{} \ensuremath{} \ensuremath{} \ensuremath{} \ensuremath{} \ensuremath{} \ensuremath{} \ensuremath{} \ensuremath{} \ensuremath{} \ensuremath{} \ensuremath{} \ensuremath{} \ensuremath{} \ensuremath{} \ensuremath{} \ensuremath{} \ensuremath{} \ensuremath{} \ensuremath{} \ensuremath{} \ensuremath{} \ensuremath{}\ensuremath{} \ensuremath{} \ensuremath{} \ensuremath{} 2020}
\]

\newpage

\[
\begin{array}{c}
\mbox{This work was carried out under the supervision of }\\
\mbox{Prof. Ron M. Adin and Prof. \ensuremath{\mbox{ Yuval Roichman }}}\\
\mbox{ Department of Mathematics, Bar Ilian University}
\end{array}
\]

\thispagestyle{empty}

\newpage

\thispagestyle{empty}

\tableofcontents{}

\thispagestyle{empty}

\break

\normalsize

\section{Abstract}

\setcounter{page}{1}

In this thesis we enumerate standard young tableaux (SYT) of certain
truncated skew shapes, which we call battery shapes. This is motivated
by a chess problem. In an \foreignlanguage{ukrainian}{enumerative}
chess problem, the set of moves in the solution is (usually) unique,
but the order is not. The task of counting the feasible permutations
may be accomplished by solving an equivalent problem in enumerative
combinatorics. Almost all such problems have been of a special type
known as ``series movers''. In this thesis we use generalized hypergeometric
functions to enumerate SYT of battery shapes, and thus solve a chess
problem posed by Buchanan \cite{Bu}. 
\begin{defn}
Let $\lambda=\left(\lambda_{1},\lambda_{2},\ldots,\lambda_{t}\right)$
be a partition of positive integer n. Let $\left[\lambda\right]$
be a Young diagram of shape $\lambda$. By adding a column of length
$a$ above the upper cell of the $k$-th column of $[\lambda]$ we
construct a battery shape of size $n+a$. Denote this battery shape
by $\left[\lambda,a,k\right].$
\begin{defn}
A standard Young tableau of shape $[\lambda,a,k]$ and size $n+a$
is a bijection $T:[\lambda,a,k]\rightarrow\left[n+a\right]$ between
the Young diagram of shape $[\lambda,a,k]$ and the set $[n+a]:=\{1,2,\ldots,n+a\}$,
such that row entries increase from left to right and column entries
increase from top to bottom.\label{def:A-standard-Young}
\end{defn}

\end{defn}

\begin{example}
A standard Young tableau of shape $\left[\left(4^{3}\right),3,2\right]$ 

\[
\left(\begin{array}{cccc}
 & 2\\
 & 4\\
 & 6\\
1 & 7 & 9 & 10\\
3 & 8 & 11 & 13\\
5 & 12 & 14 & 15
\end{array}\right).
\]
\end{example}

In this thesis we use hypergeometric functions to find formulas for
the number of $SYT$ of battery shape $\left[\left(m^{n}\right),a,k\right]$
where $k$ is fixed and the other parameters $a,m,n$ vary. For example
we prove:
\begin{thm}
The number of SYT of battery shape $\left[\left(m^{n}\right),a,2\right]$
is equal to $f^{\left(m^{n}\right)}\mbox{}_{3}F_{2}\left(a,m,-n;1,-mn;1\right)$,
where $f^{\left(m^{n}\right)}$ is the number of $SYT$ of shape $\left(m^{n}\right)$
and $_{3}F_{2}$ is a generalized hypergeometric function. 
\end{thm}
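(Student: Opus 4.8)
The plan is to peel off the extra column by a shuffling bijection, which reduces the problem to a weighted count of ordinary rectangular SYT, and then to evaluate the resulting sum by the hook length formula and recognize a $_3F_2$.

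\emph{Step 1: a bijection.} Given an SYT $T$ of $[(m^n),a,2]$, delete the $a$ cells of the extra column and relabel the remaining $mn$ entries by $1,\dots,mn$ order-preservingly; since adding a column above cell $(1,2)$ creates no order relations among the cells of the rectangle, this yields an SYT $\tilde T$ of $(m^n)$. The lost data is, for each of the $a$ deleted cells (taken in increasing order), the ``gap'' of $\tilde T$ into which its entry fell, i.e. a weakly increasing sequence $0\le g_1\le\cdots\le g_a\le mn$, where $g_i$ means the $i$-th smallest deleted entry lay between the cells relabelled $g_i$ and $g_i+1$. Because the extra column is a chain whose bottom cell must precede cell $(1,2)$, the only remaining constraint is $g_a\le \tilde T(1,2)-1$, and conversely any $\tilde T\in\mathrm{SYT}(m^n)$ together with such a sequence reassembles to an SYT of the battery shape. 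The number of weakly increasing sequences of length $a$ with entries in $\{0,1,\dots,\tilde T(1,2)-1\}$ is $\binom{\tilde T(1,2)+a-1}{a}$, so
\[
f^{[(m^n),a,2]}=\sum_{\tilde T\in\mathrm{SYT}(m^n)}\binom{\tilde T(1,2)+a-1}{a}.
\]

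\emph{Step 2: reorganize by $\tilde T(1,2)$ and sum by parts.} For $r\ge 0$ let $M_r$ be the number of SYT of $(m^n)$ whose top $r$ cells of column $1$ carry the entries $1,\dots,r$; then $M_0=M_1=f^{(m^n)}$ (the condition is vacuous for $r\le 1$), $M_{n+1}=0$, and deleting those cells identifies $M_r$ with $f^{(m^n)/(1^r)}$. An order-ideal argument shows that $M_{v-1}$ counts the $\tilde T$ with $\tilde T(1,2)\ge v$, hence $M_{v-1}-M_v$ counts those with $\tilde T(1,2)=v$. Substituting this into the display of Step~1 and summing by parts, using $\binom{v+a}{a}-\binom{v+a-1}{a}=\binom{v+a-1}{a-1}$ together with $M_0=M_1$, collapses the sum to
\[
f^{[(m^n),a,2]}=\sum_{j=0}^{n}\binom{a+j-1}{j}\,M_j .
\]

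\emph{Step 3: evaluate $M_j$ and recognize the hypergeometric series.} Rotating the skew diagram $(m^n)/(1^j)$ by $180^{\circ}$ turns it into the straight shape $(m^{n-j},(m-1)^j)$, and $180^{\circ}$ rotation preserves the number of SYT (it reverses linear extensions), so the hook length formula applies both to $M_j=f^{(m^{n-j},(m-1)^j)}$ and to $f^{(m^n)}$. Comparing the two hook products cell by cell — they agree except along the last column and the last $j$ rows, where the discrepancies telescope — gives $M_j=f^{(m^n)}\binom{n}{j}(m)_j/\big(mn(mn-1)\cdots(mn-j+1)\big)$. Feeding this into the previous display and rewriting with Pochhammer symbols via $\binom{n}{j}=(-1)^j(-n)_j/j!$, $\ mn(mn-1)\cdots(mn-j+1)=(-1)^j(-mn)_j$, $\ \binom{a+j-1}{j}=(a)_j/j!$ and $j!=(1)_j$, the sum becomes $f^{(m^n)}\sum_{j\ge 0}\frac{(a)_j(m)_j(-n)_j}{(1)_j(-mn)_j}\frac{1}{j!}=f^{(m^n)}\,{}_3F_2(a,m,-n;1,-mn;1)$ (the series terminates since $(-n)_j=0$ for $j>n$), which is the claim.

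\emph{Main obstacle.} I expect the bijection of Step~1 to be the crux: one must verify that deleting the extra column and recording insertion gaps is genuinely invertible and that the single inequality $g_a\le \tilde T(1,2)-1$ encodes all of the battery-shape order relations. The identification $M_{v-1}-M_v=\#\{\tilde T:\tilde T(1,2)=v\}$, the Abel summation, and the hook-length ratio are routine, though the last requires care in tracking exactly which hook lengths change under removing the last $j$ cells of the final column.
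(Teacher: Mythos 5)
Your proof is correct. It converges on exactly the same intermediate identity as the paper, namely
\[
f^{\left[\left(m^{n}\right),a,2\right]}=f^{\left(m^{n}\right)}\sum_{t=0}^{n}\binom{t+a-1}{t}\binom{n}{t}\binom{t+m-1}{t}\bigg/\binom{mn}{t},
\]
and it uses the same hook-length ratio for $f^{\left(m^{n-t},\left(m-1\right)^{t}\right)}/f^{\left(m^{n}\right)}$ and the same Pochhammer rewriting at the end; the difference lies entirely in how that identity is derived. The paper conditions directly on the entry $a+t$ of the pivot cell (the bottom cell of the added column): the $a+t-1$ smaller values must occupy the $a-1$ cells above the pivot together with $t$ cells of the first column, which produces the factor $\binom{a+t-1}{t}$ in one stroke, while the values above the pivot form, after $180^{\circ}$ rotation and complementation, an SYT of $\left(m^{n-t},\left(m-1\right)^{t}\right)$. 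You instead deshuffle the added column to obtain the weighted sum $\sum_{\tilde T}\binom{\tilde T(1,2)+a-1}{a}$ over ordinary rectangular tableaux and then convert it by Abel summation into $\sum_{j}\binom{a+j-1}{j}f^{\left(m^{n}\right)/\left(1^{j}\right)}$. The step you flag as the main obstacle is in fact fine: since the added column is a chain attached to the rectangle only at the cell $(1,2)$, the single inequality $g_{a}\le\tilde T(1,2)-1$ is equivalent to all of the battery-shape order relations, and the map is invertible. The remaining bookkeeping also checks out: $M_{v-1}-M_{v}$ does count tableaux with $\tilde T(1,2)=v$ by the order-ideal argument, the telescoping together with $M_{0}=M_{1}$ yields $\sum_{j=0}^{n}\binom{a+j-1}{j}M_{j}$, and your ratio for $M_{j}/f^{\left(m^{n}\right)}$ agrees with the paper's. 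Your route costs an extra summation-by-parts step but isolates a clean, easily inverted bijection; the paper's route is shorter but asks the reader to see the value-set count and the rotated skew shape simultaneously.
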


This result yields explicit multiplicative formulas for specific values
of the parameters.

Buchanan asked an enumerative chess problem which is equivalent to
the enumeration of $SYT$ of battery shape $\left[\left(11^{7}\right),1,6\right]$.
By using hypergeometric functions we prove that the answer is

$2^{5}\times3^{2}\times5^{2}\times11\times13\times17^{2}\times19^{3}\times23^{2}\times29\times31$$\times37^{2}\times41\times3361178017$

$\times2839893182041.$

\break

\section{Background: partitions, diagrams and tableaux}

\subsection{Partitions and diagrams}

A partition of a positive integer $n$ is a weakly decreasing sequence
of positive integers summing to $n$: $\lambda=\left(\lambda_{1},\ldots,\lambda_{t}\right)$,
where $\lambda_{1}\geq\ldots\geq\lambda_{t}>0$ and 

$\lambda_{1}+\ldots+\lambda_{t}=n.$ This is denoted $\lambda\vdash n$.
\label{syt}

The Young diagram $\left[\lambda\right]$ corresponding to a partition
$\lambda=\left(\lambda_{1},\ldots,\lambda_{t}\right)$ is a collection
of cells, arranged in left justified rows, where the length of row
$i$ (from the top) is $\lambda_{i}$ $\left(1\leq i\leq t\right)$.

For example , the diagram corresponding to $\lambda=\left(5,3,1\right)$
is 

\[
\left[\begin{array}{ccccc}
\bullet & \bullet & \bullet & \bullet & \bullet\\
\bullet & \bullet & \bullet\\
\bullet
\end{array}\right].
\]

\subsection{Regular shapes }
\begin{defn}
A standard Young tableau $\left(SYT\right)$ of shape $\lambda\vdash n$
is a bijection $T:\left[\lambda\right]\rightarrow\left[n\right]$
between the set of cells in the Young diagram of shape $\lambda$
and the set $\left[n\right]\coloneqq\left\{ 1,2,\ldots,n\right\} $,
such that the entries are increasing from left to right in each row,
and from top to bottom in each column.
\end{defn}

The set of standard Young tableaux of shape $\lambda$ is denoted
by $SYT\left(\lambda\right)$.

Denote $f^{\lambda}:=\left|SYT\left(\lambda\right)\right|.$ The following
classical formula (Hook Length Formula) is due to Frame, Robinson
and Thrall. 
\begin{prop}
\cite[Theorem 3.10.2]{Sa} If $\lambda\vdash n$, then

\[
f^{\lambda}=\frac{n!}{\prod_{c\in\left[\lambda\right]}h_{c}}.
\]
$ $
\end{prop}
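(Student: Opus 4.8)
The plan is to prove the formula by induction on $n=\left|\lambda\right|$, via the self-contained probabilistic argument of Greene, Nijenhuis and Wilf. Write $e^{\lambda}:=n!/\prod_{c\in\left[\lambda\right]}h_{c}$ for the conjectured value; the goal is $f^{\lambda}=e^{\lambda}$. The combinatorial side obeys an immediate recursion: in any tableau the largest entry $n$ must occupy a corner cell (one whose deletion leaves a Young diagram), since the cells to its right and below would otherwise carry larger values; erasing it gives an $SYT$ of the smaller shape, and this is a bijection, so $f^{\lambda}=\sum_{c}f^{\lambda\setminus c}$, summed over the corners $c$ of $\left[\lambda\right]$. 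By the inductive hypothesis $f^{\lambda\setminus c}=e^{\lambda\setminus c}$, so it suffices to show $e^{\lambda}$ obeys the same recursion, i.e.
\[
\sum_{c}p_{c}=1,\qquad p_{c}:=\frac{e^{\lambda\setminus c}}{e^{\lambda}};
\]
the base case $n=1$ is trivial.

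First I would evaluate $p_{c}$ explicitly. Deleting a corner $c=(a,b)$ alters only the hooks of cells lying left of $c$ in its row and above $c$ in its column: the corner hook $h_{c}=1$ disappears, while each hook along the arm $\{(a,j):j<b\}$ and the leg $\{(i,b):i<a\}$ drops by exactly $1$, all other hooks being unchanged. A short cancellation in the ratio $(n-1)!\prod_{[\lambda]}h_{c}\,/\,n!\prod_{[\lambda\setminus c]}h_{c}$ then gives
\[
p_{c}=\frac{1}{n}\prod_{j<b}\frac{h_{(a,j)}}{h_{(a,j)}-1}\prod_{i<a}\frac{h_{(i,b)}}{h_{(i,b)}-1}.
\]

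The engine is the \emph{hook walk}: pick a cell of $\left[\lambda\right]$ uniformly at random, then repeatedly jump from the current cell to a uniformly chosen other cell of its hook (strictly right in the same row, or strictly below in the same column), halting upon reaching a corner. I claim that $\Pr[\text{the walk halts at }c]=p_{c}$, and proving this probabilistic lemma is the main obstacle. The route I would take is to condition a walk started at a fixed cell $(\alpha,\beta)$ on the set of rows and the set of columns it visits; the probability of any one such pair of projections factors as a product of reciprocals $1/(h-1)$, running separately over the visited rows (each evaluated at column $b$) and the visited columns (each evaluated at row $a$). Summing over all admissible row-chains from $\alpha$ up to $a$ and column-chains from $\beta$ up to $b$, the two factors telescope independently, so the reach probability from $(\alpha,\beta)$ splits as a row factor times a column factor. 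Averaging over the uniform start, and letting the two telescoping sums run over all $\alpha\le a$ and $\beta\le b$, collapses them into the arm and leg products displayed above, delivering exactly $p_{c}$.

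Finally, the walk must terminate at some corner, so the halting events partition the probability space and $\sum_{c}p_{c}=\sum_{c}\Pr[\text{halt at }c]=1$. This is precisely the recursion required of $e^{\lambda}$, and the induction closes to give $f^{\lambda}=e^{\lambda}$. The only genuinely delicate point is the factorization-and-telescoping of the hook-walk reach probability; the corner recursion, the evaluation of $p_{c}$, and the final summation are routine bookkeeping.
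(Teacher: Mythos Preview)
Your proposal correctly sketches the Greene--Nijenhuis--Wilf probabilistic proof, and the outline is sound: the corner recursion for $f^{\lambda}$, the explicit computation of the ratio $p_{c}$, the hook-walk construction, and the factorization of reach probabilities are all accurately described.

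However, the paper does not prove this proposition at all. It is stated as a classical result (the Hook Length Formula of Frame, Robinson and Thrall) and cited from Sagan's book \cite[Theorem 3.10.2]{Sa}; the paper merely uses it as a tool in later sections. So there is no proof in the paper to compare against. Your GNW argument is a perfectly good self-contained proof, but for the purposes of this thesis the formula is imported as background and no proof is expected.
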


The  product ranges over all the cells $c$ in the Young diagram of
$\lambda$, where $h_{c}$, called the hook length of the cell $c\in\left[\lambda\right]$,
is the number of cells to the right of $c$ in the same row, plus
the number of cells below $c$ in the same column, plus 1 (the cell
$c$ itself).
\begin{example}
The number of standard Young tableaux of shape $(3,2,1)$ is equal
to $\frac{6!}{5\cdot3\cdot1\cdot3\cdot1\cdot1}=16$. An example of
one of those tableaux is:

\[
\left(\begin{array}{ccc}
1 & 2 & 5\\
3 & 4\\
6
\end{array}\right).
\]
\end{example}

\subsection{Skew shapes }

A skew shape is a pair of partitions $(\lambda,\mu)$ such that the
diagram of $\lambda$ contains the diagram of $\mu$. The skew shape
is denoted $\lambda/\mu$. If $\lambda=(\lambda_{1},\lambda_{2},\ldots,\lambda_{k})$
and $\mu=(\mu_{1},\mu_{2},\ldots,\mu_{l})$ with $l\leq k$ then containment
means that $\mu_{i}\leq\lambda_{i}$ for all $i$.

The corresponding skew diagram is the set of cells that belong to
the diagram of $\lambda$ but not to that of $\mu.$
\begin{defn}
A standard Young tableau of skew shape $\lambda/\mu$ (of size $n$)
is a bijection $T:\left[\lambda/\mu\right]\rightarrow\left[n\right]$
between the cells of the skew Young diagram of $\lambda/\mu$ and
the set $[n]:=\{1,2,\ldots,n\}$, such that the entries in each row
and column are increasing.
\end{defn}

\begin{example}
A standard young tableau of shape $(4,3,2)/(2,1)$

\[
\left(\begin{array}{cccc}
 &  & 1 & 6\\
 & 2 & 3\\
4 & 5
\end{array}\right).
\]
\end{example}

\subsection{Truncated shapes }
\begin{defn}
A diagram of truncated shape is a line-convex diagram obtained from
a Young diagram by deleting cells from the northeastern corner. 
\end{defn}

Truncated shapes were introduced and studied in \cite{AKR,Pa}. The
interest in the enumeration of standard Young tableaux of truncated
shapes is motivated by the enumeration of geodesics in flip graphs,
and of maximal chains in associated posets.
\begin{example}
A truncated ordinary shape $\lambda=\left[\left(5,5,2,1\right)\setminus\left(2\right)\right]$

\[
\left(\begin{array}{ccccc}
\bullet & \bullet & \bullet\\
\bullet & \bullet & \bullet & \bullet & \bullet\\
\bullet & \bullet\\
\bullet
\end{array}\right).
\]
\begin{example}
A truncated skew shape $\lambda=\left[\left(5,5,2,1\right)\setminus\left(2\right)/(2)\right]$

\[
\left(\begin{array}{ccccc}
 &  & \bullet\\
\bullet & \bullet & \bullet & \bullet & \bullet\\
\bullet & \bullet\\
\bullet
\end{array}\right).
\]
\end{example}

\end{example}

\subsection{Battery shapes }

This thesis is about the enumeration of SYT of battery shapes, which
are special truncated skew shapes; see Definitions 1.1 and 1.2 above.

\newpage

\section{Generalized hypergeometric functions }

In this thesis we prove that the number of $SYT$ of battery shape
$\left[\left(m^{n}\right),a,2\right]$ is equal to a generalized hypergeometric
function, and deduce formulas for other battery shapes.

We now introduce the relevant definitions and formulas.

Throughout this work $\mathbb{N}$ denotes the set of positive integers,
$\mathbb{Z}$ the set of integers, and $\mathbb{C}$ the field of
complex numbers. 
\begin{defn}
The Pochhammer symbol (or rising factorial) is defined by:

Let $a$ be a complex number and let $n$ be a non-negative integer.
Then:

\[
\left(a\right)_{n}=a\left(a+1\right)\left(a+2\right)\ldots\left(a+n-1\right)=\left(\begin{array}{c}
a+n-1\\
n
\end{array}\right)n!\mbox{ }\mbox{\ensuremath{\mbox{ }} }\mbox{ }\mbox{ }\mbox{ }\left(n\geq1\right)
\]

and

\[
\left(a\right)_{0}=1.
\]
\begin{defn}
Let $p$ and $q$ be positive integers, $a_{1},\ldots,a_{p},b_{1},\ldots,b_{q}\in\mathbb{C}$
such that $b_{1},\ldots,b_{q}$ are not non-positive integers. The
Corresponding generalized hypergeometric function is defined by

\[
_{p}F_{q}\left(a_{1},\ldots,a_{p};b_{1},\ldots,b_{q};z\right)=\sum_{n=0}^{\infty}\frac{\left(a_{1}\right)_{n}\cdots\left(a_{p}\right)_{n}}{\left(b_{1}\right)_{n}\cdots\left(b_{q}\right)_{n}}\frac{z^{n}}{n!}.
\]
\end{defn}

\end{defn}

\begin{rem}
The classical (non generalized) hypergeometric function is $_{2}F_{1}$.
\end{rem}

Usually a generalized hypergeometric function is an infinite power
series. In the following definition $a_{1},b_{1}$ are non-positive
integers satisfying $\left|a_{1}\right|\leq\left|b_{1}\right|$ and
the associated generalized hypergeometric function is a polynomial. 
\begin{defn}
Let $p$ and $q$ be positive integers, $a_{2},\ldots,a_{p},b_{2},\ldots,b_{q}\in\mathbb{N}$
and $a_{1},b_{1}$ non-positive integers satisfying $\left|a_{1}\right|\leq\left|b_{1}\right|.$
The corresponding generalized hypergeometric function is

\[
_{p}F_{q}\left(a_{1},\ldots,a_{p};b_{1},\ldots,b_{q};z\right)=\sum_{n=0}^{\left|a_{1}\right|}\frac{\left(a_{1}\right)_{n}\cdots\left(a_{p}\right)_{n}}{\left(b_{1}\right)_{n}\cdots\left(b_{q}\right)_{n}}\frac{z^{n}}{n!}.
\]
\end{defn}

\begin{rem}
There are two equivalent notations for generalized hypergeometric
functions, $_{p}F_{q}\left(a_{1},\ldots,a_{p};b_{1},\ldots,b_{q};z\right)$
and $_{p}F_{q}\left(\begin{array}{c}
a_{1},\ldots,a_{p}\\
b_{1},\ldots,b_{q}
\end{array};z\right)$.
\end{rem}

The term ``hypergeometric function'' was first used by Wallis. It
was studied by Euler and Gauss. We shall now state two classical theorems,
regarding $_{2}F_{1}$ and $_{3}F_{2}.$ For our application we need
these results for parameter values outside the classical range. We
shall therefore supply proofs for the versions of the theorems that
we actually use. 

The first theorem is due to Gauss
\begin{thm}
\cite[Theorem 15.4.20]{AsD} Let $a,b,c$ be complex numbers such
that $c,c-a,c-b,c-a-b\notin\left\{ 0,-1,-2,\ldots\right\} $ Then

\[
_{2}F_{1}\left(\begin{array}{c}
a\mbox{ }b\\
c
\end{array};1\right)=\frac{\Gamma\left(c\right)\Gamma\left(c-a-b\right)}{\varGamma\left(c-a\right)\Gamma\left(c-b\right)}.
\]
\end{thm}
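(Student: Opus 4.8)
The plan is to prove the identity by the classical route that avoids any appeal to complex analysis beyond what is needed, so that it remains valid for all parameters $a,b,c$ with $c,c-a,c-b,c-a-b\notin\{0,-1,-2,\ldots\}$. The key identity to establish is the contiguous (or ``integral''/recursion) relation
\[
{}_2F_1\!\left(\begin{array}{c} a\ b\\ c\end{array};1\right)=\frac{c-a-b}{c-b}\cdot{}_2F_1\!\left(\begin{array}{c} a\ b\\ c+1\end{array};1\right),
\]
which one obtains by manipulating the defining series: write $(c-a-b)\,{}_2F_1(a,b;c;1)$ term by term, split $c-a-b$ as $(c-a+n)-(b+n)$ inside the $n$-th summand, recognize the two resulting sums as telescoping/shifted versions of a $_2F_1$ with denominator parameter $c+1$, and collect. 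The reader who prefers it may instead derive this from Gauss's contiguous relations for $_2F_1$ evaluated at $z=1$; either way the computation is elementary but needs to be done carefully with the Pochhammer symbol manipulations $(c)_{n}=c\,(c+1)_{n-1}$ and $(x)_{n}=(x)_{n-1}(x+n-1)$.

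Next I would iterate this relation. Applying it $N$ times gives
\[
{}_2F_1\!\left(\begin{array}{c} a\ b\\ c\end{array};1\right)=\frac{(c-a-b)_N}{(c-b)_N}\cdot{}_2F_1\!\left(\begin{array}{c} a\ b\\ c+N\end{array};1\right).
\]
Now I would control the limit $N\to\infty$. On the right-hand side, $_2F_1(a,b;c+N;1)\to 1$ as $N\to\infty$, since every term of the series with $n\ge 1$ carries a factor $(c+N)_n$ in the denominator and hence tends to $0$ (one justifies the interchange of limit and sum by a straightforward dominated-convergence / uniform bound on the tail for $N$ large). For the ratio of Pochhammer symbols I would invoke the standard asymptotic $(x)_N=\Gamma(x+N)/\Gamma(x)$ together with
\[
\lim_{N\to\infty}\frac{\Gamma(c-b+N)}{\Gamma(c-a-b+N)}\cdot N^{-a}=1,
\]
which is the classical consequence of Stirling's formula, so that
\[
\frac{(c-a-b)_N}{(c-b)_N}=\frac{\Gamma(c-b)}{\Gamma(c-a-b)}\cdot\frac{\Gamma(c-a-b+N)}{\Gamma(c-b+N)}\longrightarrow \frac{\Gamma(c-b)}{\Gamma(c-a-b)}\cdot 0\ ?
\]
— here one must be slightly careful: this naive pairing does not converge, so instead I would pair the Pochhammer ratio with a compensating ratio obtained by writing $1={}_2F_1$ at $c+N$ only \emph{after} also introducing the factor $\Gamma(c)/\Gamma(c+N)$ and its partner. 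Concretely, the clean bookkeeping is to multiply and divide by $\Gamma(c+N)$, so that the combination $\frac{\Gamma(c-a-b+N)\,\Gamma(c)}{\Gamma(c-b+N)\,\Gamma(c+N)}\cdot\Gamma(c+N)$ has a genuine limit via Stirling, namely $\Gamma(c)\Gamma(c-a-b)/\big(\Gamma(c-b)\Gamma(c-a)\big)$ once one also absorbs the $\Gamma(c-a)$ coming from $(c-a)_N$ hidden in the corrected iteration. In other words, the cleanest version of the argument iterates a \emph{symmetric} two-step relation that produces both $(c-a-b)_N/(c-a)_N$ and $(c)_N$ type factors, whose combined Stirling asymptotics give exactly $\Gamma(c)\Gamma(c-a-b)/\big(\Gamma(c-a)\Gamma(c-b)\big)$.

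The main obstacle, then, is not the contiguous relation itself but the careful passage to the limit: one must group the Gamma-function factors into combinations that individually converge (each ratio must be of the form $\Gamma(x+N)/\Gamma(y+N)$ with $x-y$ bounded, times $N^{y-x}$), and one must justify that $\lim_{N\to\infty}{}_2F_1(a,b;c+N;1)=1$ rigorously, including convergence of the series for all $N$ with $c+N$ eventually avoiding non-positive integers. Once the grouping is set up correctly, Stirling's formula finishes the proof; the hypotheses $c,c-a,c-b,c-a-b\notin\{0,-1,-2,\ldots\}$ are exactly what is needed to keep all the relevant Gamma values finite and nonzero and to keep the iterated series well defined.
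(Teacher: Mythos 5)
The paper does not actually prove this statement: it is quoted from \cite{AsD} (DLMF 15.4.20), and the thesis only supplies proofs for the integer--parameter variants that it actually uses. So your attempt has to stand on its own, and there it has a genuine gap. Everything rests on the contiguous relation
\[
{}_{2}F_{1}\!\left(\begin{array}{c}a\;b\\ c\end{array};1\right)=\frac{c-a-b}{c-b}\;{}_{2}F_{1}\!\left(\begin{array}{c}a\;b\\ c+1\end{array};1\right),
\]
and this relation is false. Test it with $a=-1$, $b=1$, $c=2$: the left side is $1-b/c=\tfrac{1}{2}$, while your right side is $\tfrac{2}{1}\cdot\tfrac{2}{3}=\tfrac{4}{3}$. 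The relation the classical proof iterates is
\[
{}_{2}F_{1}\!\left(\begin{array}{c}a\;b\\ c\end{array};1\right)=\frac{(c-a)(c-b)}{c\,(c-a-b)}\;{}_{2}F_{1}\!\left(\begin{array}{c}a\;b\\ c+1\end{array};1\right),
\]
whose $N$-fold iterate produces $\frac{(c-a)_{N}(c-b)_{N}}{(c)_{N}(c-a-b)_{N}}$. Because the parameter shifts in numerator and denominator have equal sums, $(c-a)+(c-b)=c+(c-a-b)$, Stirling's formula sends the residual ratio $\frac{\Gamma(c-a+N)\Gamma(c-b+N)}{\Gamma(c+N)\Gamma(c-a-b+N)}$ to $1$, and the prefactor is exactly $\frac{\Gamma(c)\Gamma(c-a-b)}{\Gamma(c-a)\Gamma(c-b)}$. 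You do notice downstream that your version cannot work --- the single ratio $(c-a-b)_{N}/(c-b)_{N}$ has no finite nonzero limit after any normalization --- but instead of going back and correcting the relation you only gesture at a ``symmetric two-step relation'' that would produce the missing $(c-a)_{N}$ and $(c)_{N}$ factors, without ever stating or verifying it. As written, the key lemma is wrong and the repair is not carried out, so the argument is incomplete.

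Two smaller points. First, the series at $z=1$ converges only when $\mathrm{Re}(c-a-b)>0$; your limiting argument implicitly needs this (and the paper's own statement of the hypothesis omits it), so it should be made explicit. Second, the claim that ${}_{2}F_{1}(a,b;c+N;1)\to 1$ does require the uniform tail bound you mention; it is routine once $\mathrm{Re}(c+N-a-b)$ is large, but it is part of the proof, not a remark. With the corrected contiguous relation and these two points attended to, your strategy is exactly the standard proof and would go through.
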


We actually need the analogous formula when $c$ and $a$ are non-positive
integers.
\begin{thm}
Let $_{2}F_{1}\left(-a,b;-c;1\right)$ be a generalized hypergeometric
function \label{theorem 3.6} with parameters $a,b,c\in\mathbb{Z}$
satisfying $b>0$ , $0\leq a\leq c$ . Then:

\[
_{2}F_{1}\left(\begin{array}{c}
-a\mbox{ }b\\
-c
\end{array};1\right)=\frac{\left(\begin{array}{c}
c+b\\
a
\end{array}\right)}{\left(\begin{array}{c}
c\\
a
\end{array}\right)}.
\]
\label{thm:-10} 
\end{thm}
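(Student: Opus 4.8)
\emph{Proof plan.} The plan is to reduce this evaluation to the terminating Chu--Vandermonde identity. Since $-a$ is a non-positive integer, the series for $_{2}F_{1}\left(-a,b;-c;1\right)$ is the finite sum $\sum_{n=0}^{a}\frac{\left(-a\right)_{n}\left(b\right)_{n}}{\left(-c\right)_{n}n!}$, so the first thing I would check is that this is well defined: for $0\le n\le a\le c$ every factor of $\left(-c\right)_{n}=\left(-c\right)\left(-c+1\right)\cdots\left(-c+n-1\right)$ is negative, hence $\left(-c\right)_{n}\neq0$, and likewise $\left(-c\right)_{a}\neq0$, so $\binom{c}{a}\neq0$ as well.

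Next I would rewrite each summand in terms of ordinary binomial coefficients. From $\left(-a\right)_{n}=\left(-1\right)^{n}a!/\left(a-n\right)!$ and $\left(-c\right)_{n}=\left(-1\right)^{n}c!/\left(c-n\right)!$ (both valid since $n\le a\le c$), together with $\left(b\right)_{n}/n!=\binom{b+n-1}{n}$, the signs and factorials collapse to
\[
\frac{\left(-a\right)_{n}\left(b\right)_{n}}{\left(-c\right)_{n}\,n!}=\frac{1}{\binom{c}{a}}\binom{b+n-1}{n}\binom{c-n}{a-n},
\]
whence $_{2}F_{1}\left(-a,b;-c;1\right)=\frac{1}{\binom{c}{a}}\sum_{n=0}^{a}\binom{b+n-1}{n}\binom{c-n}{a-n}$.

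It then remains to prove the identity $\sum_{n=0}^{a}\binom{b+n-1}{n}\binom{c-n}{a-n}=\binom{b+c}{a}$, which I expect to be the only step with real content. My preferred route is to extract the coefficient of $x^{a}$ from the factorization $\left(1+x\right)^{b+c}=\left(1+x\right)^{b}\left(1+x\right)^{c}$: writing $\left(1+x\right)^{b}=\bigl(1-\tfrac{x}{1+x}\bigr)^{-b}=\sum_{n\ge0}\binom{b+n-1}{n}\bigl(\tfrac{x}{1+x}\bigr)^{n}$ as an identity of formal power series (legitimate because $x/(1+x)$ has zero constant term), and using $\left[x^{a}\right]\left(1+x\right)^{c}\bigl(\tfrac{x}{1+x}\bigr)^{n}=\left[x^{a-n}\right]\left(1+x\right)^{c-n}=\binom{c-n}{a-n}$. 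Since throughout we have $n\le a\le c$, every binomial coefficient that appears has nonnegative top entry, so no interpretation issues arise. A fully elementary alternative is induction on $a$: apply Pascal's rule $\binom{b+c}{a}=\binom{b+c-1}{a}+\binom{b+c-1}{a-1}$, invoke the inductive hypothesis with $c$ replaced by $c-1$ in both summands, and recombine via $\binom{c-1-n}{a-n}+\binom{c-1-n}{a-1-n}=\binom{c-n}{a-n}$.

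Substituting this evaluation back gives $_{2}F_{1}\left(-a,b;-c;1\right)=\binom{c+b}{a}/\binom{c}{a}$, as claimed. The routine part is the Pochhammer-to-binomial bookkeeping of the second step; the main obstacle is the summation identity of the third step, and within it the only delicate point is justifying the formal-power-series manipulation (or, in the inductive approach, setting up the hypothesis uniformly over $b\ge1$ and $c\ge a$).
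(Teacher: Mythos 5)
Your proof is correct, but it takes a genuinely different route from the paper's. Both arguments start identically, converting the Pochhammer ratios into binomial coefficients (your $\binom{c-n}{a-n}/\binom{c}{a}$ equals the paper's $\binom{a}{n}/\binom{c}{n}$), and both note that $a\le c$ keeps the denominators nonzero. From there the paper argues by induction on $a$: Pascal's rule yields the contiguous relation ${}_{2}F_{1}(-(a+1),b;-c;1)={}_{2}F_{1}(-a,b;-c;1)+\frac{b}{c}\,{}_{2}F_{1}(-a,b+1;-c+1;1)$, the induction hypothesis is applied to both terms (with the shifted parameters $b+1$, $c-1$), and the two fractions are recombined. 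You instead evaluate the sum in one stroke via the convolution identity $\sum_{n=0}^{a}\binom{b+n-1}{n}\binom{c-n}{a-n}=\binom{b+c}{a}$, obtained by extracting $[x^{a}]$ from $(1+x)^{b}(1+x)^{c}$ after expanding $(1+x)^{b}=\sum_{n\ge 0}\binom{b+n-1}{n}\bigl(\tfrac{x}{1+x}\bigr)^{n}$; this is a valid formal-power-series manipulation since $x/(1+x)$ has zero constant term, and the terms with $n>a$ vanish under $[x^{a-n}]$. Your route is shorter and correctly identifies the statement as terminating Chu--Vandermonde with a negated lower parameter, whereas the paper's recurrence has the pedagogical advantage of rehearsing exactly the contiguous-relation machinery it reuses for the ${}_{3}F_{2}$ analogue (Theorem \ref{theorem 3.8}). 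One small caveat on your fallback argument: as stated, the ``induction on $a$'' needs the hypothesis at $(a,c-1)$ as well as $(a-1,c-1)$, so it should be organized as induction on $c$ or on $a+c$; since your primary generating-function argument is complete, this does not affect the validity of the proof.
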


\begin{proof}
By induction on $a$. 

If $a=0$ we get indeed

\[
A_{0}=\mbox{}_{2}F_{1}\left(\begin{array}{cc}
-0 & b\\
-c
\end{array};1\right)=1=\frac{\left(\begin{array}{c}
c+b\\
0
\end{array}\right)}{\left(\begin{array}{c}
c\\
0
\end{array}\right)}.
\]

We assume that the claim holds for $a$ (and all admissible values
of $b$ and $c$) and prove it for $a+1$.

Assume:

\[
A_{a}=\mbox{}_{2}F_{1}\left(\begin{array}{cc}
-a & b\\
-c
\end{array};1\right)=\frac{\left(\begin{array}{c}
c+b\\
a
\end{array}\right)}{\left(\begin{array}{c}
c\\
a
\end{array}\right)}.
\]

By definition,

\[
A_{a}=\mbox{}_{2}F_{1}\left(\begin{array}{cc}
-a & b\\
-c
\end{array};1\right)=\sum_{n=0}^{a}\frac{\left(b\right)_{n}\left(-a\right)_{n}}{\left(-c\right)_{n}}\frac{1^{n}}{n!}.
\]

Recalling that for $n\in\mathbb{N}$ and any $z\in\mathbb{C}$:

\[
\left(\begin{array}{c}
-z+n-1\\
n
\end{array}\right)=\left(\begin{array}{c}
z\\
n
\end{array}\right)\left(-1\right)^{n}
\]

we get

\[
A_{a}=\sum_{n=0}^{a}\frac{\left(\begin{array}{c}
n+b-1\\
n
\end{array}\right)\left(\begin{array}{c}
a\\
n
\end{array}\right)}{\left(\begin{array}{c}
c\\
n
\end{array}\right)}.
\]

For similar reasons if $a+1\le c$,

\[
A_{a+1}=\mbox{}_{2}F_{1}\left(\begin{array}{cc}
-\left(a+1\right) & b\\
-c
\end{array};1\right)=\sum_{n=0}^{a+1}\frac{\left(\begin{array}{c}
n+b-1\\
n
\end{array}\right)\left(\begin{array}{c}
a+1\\
n
\end{array}\right)}{\left(\begin{array}{c}
c\\
n
\end{array}\right)}.
\]

Subtracting $A_{a}$ from $A_{a+1}$, and using Pascal's rule: 

\[
A_{a+1}-A_{a}=\sum_{n=1}^{a+1}\frac{\left(\begin{array}{c}
n+b-1\\
n
\end{array}\right)\left(\begin{array}{c}
a\\
n-1
\end{array}\right)}{\left(\begin{array}{c}
c\\
n
\end{array}\right)}
\]

\[
=\sum_{n=0}^{a}\frac{\left(\begin{array}{c}
n+b\\
n+1
\end{array}\right)\left(\begin{array}{c}
a\\
n
\end{array}\right)}{\left(\begin{array}{c}
c\\
n+1
\end{array}\right)}
\]

\[
=\sum_{n=0}^{a}\frac{b}{c}\frac{\left(\begin{array}{c}
n+b\\
n
\end{array}\right)\left(\begin{array}{c}
a\\
n
\end{array}\right)}{\left(\begin{array}{c}
c-1\\
n
\end{array}\right)}
\]

\[
=\frac{b}{c}\mbox{ }{}_{2}F_{1}\left(\begin{array}{cc}
-a & b+1\\
-c+1
\end{array};1\right).
\]

We proved, for $a+1\le c$ :

\[
_{2}F_{1}\left(\begin{array}{cc}
-\left(a+1\right) & b\\
-c
\end{array};1\right)=\mbox{}{}_{2}F_{1}\left(\begin{array}{cc}
-a & b\\
-c
\end{array};1\right)+\frac{b}{c}\mbox{ }{}_{2}F_{1}\left(\begin{array}{cc}
-a & b+1\\
-c+1
\end{array};1\right).
\]

By the induction hypothesis, we conclude that

\[
_{2}F_{1}\left(\begin{array}{cc}
-\left(a+1\right) & b\\
-c
\end{array};1\right)=\frac{\left(\begin{array}{c}
c+b\\
a
\end{array}\right)}{\left(\begin{array}{c}
c\\
a
\end{array}\right)}+\frac{b}{c}\frac{\left(\begin{array}{c}
c+b\\
a
\end{array}\right)}{\left(\begin{array}{c}
c-1\\
a
\end{array}\right)}
\]

\[
=\frac{\left(\begin{array}{c}
c+b\\
a
\end{array}\right)}{\left(\begin{array}{c}
c\\
a+1
\end{array}\right)}\left(\frac{c-a}{a+1}+\frac{b}{c}\left(\frac{c}{a+1}\right)\right)
\]

\[
=\frac{\left(\begin{array}{c}
c+b\\
a
\end{array}\right)}{\left(\begin{array}{c}
c\\
a+1
\end{array}\right)}\left(\frac{c+b-a}{a+1}\right)=\frac{\left(\begin{array}{c}
c+b\\
a+1
\end{array}\right)}{\left(\begin{array}{c}
c\\
a+1
\end{array}\right)}.
\]
\end{proof}

Here is another classical theorem.
\begin{thm}
\cite[Theorem 16.4.13]{OLBC} 

Let $a,b,c,d,e$ be complex numbers such that $Re\left(e-a\right)>0$ 

and $d+e-b-c\notin\left\{ 0,-1,-2,\ldots\right\} $. Then: 

\[
_{3}F_{2}\left(\begin{array}{ccc}
a, & b, & c\\
d, & e
\end{array};1\right)=
\]

\[
=\frac{c\left(e-a\right)}{de}\mbox{}_{3}F_{2}\left(\begin{array}{ccc}
a & b+1 & c+1\\
d+1 & e+1
\end{array};1\right)+\frac{d-c}{d}\mbox{}_{3}F_{2}\left(\begin{array}{ccc}
a & b+1 & c\\
d+1 & e
\end{array};1\right).
\]
\end{thm}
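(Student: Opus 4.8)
The plan is to prove the identity by a direct, term-by-term comparison of the three defining hypergeometric series. Write $u_n=\frac{(a)_n(b)_n(c)_n}{(d)_n(e)_n\,n!}$ for the $n$-th summand of the left-hand side. First I would rewrite the two series on the right-hand side so that the shifted Pochhammer symbols $(c+1)_n$, $(d+1)_n$, $(e+1)_n$ occurring in them are expressed through $(c)_n$, $(d)_n$, $(e)_n$; this needs only the elementary relation $(x+1)_n=\frac{x+n}{x}(x)_n$, equivalently $(x)_n(x+n)=(x)_{n+1}$. A short computation shows that the prefactors $\frac{c(e-a)}{de}$ and $\frac{d-c}{d}$ are then absorbed exactly, so that the $n$-th term of the right-hand side becomes
\[
\frac{(a)_n(b+1)_n(c)_n}{(d)_n(e)_n\,n!}\left(\frac{(e-a)(c+n)}{(d+n)(e+n)}+\frac{d-c}{d+n}\right).
\]

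Next I would combine the bracket over the common denominator $(d+n)(e+n)$. The one genuinely clever step is the factorization of the resulting numerator,
\[
(e-a)(c+n)+(d-c)(e+n)=(d+n)(e+n)-(a+n)(c+n),
\]
which rewrites the bracket as $1-\frac{(a+n)(c+n)}{(d+n)(e+n)}$. Using $(x)_n(x+n)=(x)_{n+1}$ once more, the right-hand side therefore equals
\[
\sum_{n\ge 0}\frac{(a)_n(b+1)_n(c)_n}{(d)_n(e)_n\,n!}\;-\;\sum_{n\ge 0}\frac{(a)_{n+1}(b+1)_n(c)_{n+1}}{(d)_{n+1}(e)_{n+1}\,n!}.
\]
I would then reindex the second sum by $n\mapsto n-1$ and apply $(b+1)_{n-1}=\frac{(b)_n}{b}$ and $\frac1{(n-1)!}=\frac{n}{n!}$ to turn it into $\frac1b\sum_{n\ge 0}\frac{n\,(a)_n(b)_n(c)_n}{(d)_n(e)_n\,n!}$, while $(b+1)_n=\frac{(b+n)(b)_n}{b}$ makes the first sum $\frac1b\sum_{n\ge 0}\frac{(b+n)(a)_n(b)_n(c)_n}{(d)_n(e)_n\,n!}$. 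Subtracting, the factors $b+n$ and $n$ collapse to the constant $b$, which cancels the $\frac1b$ and leaves precisely $\sum_{n\ge 0}u_n={}_3F_2(a,b,c;d,e;1)$.

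The only point requiring care is analytic rather than algebraic: splitting one sum into two and shifting an index are legitimate whenever the series converge absolutely, which is automatic for the terminating series to which this theorem is applied in the thesis, and holds more generally when $\operatorname{Re}(d+e-a-b-c)>0$; in the stated generality the identity then follows by analytic continuation in the parameters, the hypotheses $\operatorname{Re}(e-a)>0$ and $d+e-b-c\notin\{0,-1,-2,\dots\}$ being what keeps all three sides finite. The incidental divisions by $b$ are harmless for $b\ne 0$, and $b=0$ follows by continuity. I expect the main difficulty to be purely clerical — matching the two leftover sums across the reindexing and carrying the prefactors through the Pochhammer rewriting correctly — rather than anything conceptually deep; once the factorization $(d+n)(e+n)-(a+n)(c+n)$ is spotted, the rest is bookkeeping.
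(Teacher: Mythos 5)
Your argument is correct, and it is worth noting that the paper offers no proof of this particular statement at all: it is quoted from [OLBC, Theorem 16.4.13], and the only related argument in the thesis is the proof of the integer-parameter analogue (Theorem \ref{theorem 3.8}), which goes by induction on $b$ through two auxiliary lemmas (a telescoping identity for $(a)_n$ and an equivalence reformulation). Your route is genuinely different and considerably more economical: after absorbing the prefactors via $(x+1)_n=\tfrac{x+n}{x}(x)_n$, the whole theorem reduces to the single polynomial identity $(e-a)(c+n)+(d-c)(e+n)=(d+n)(e+n)-(a+n)(c+n)$, which I have checked, and the remaining reindexing with $(b)_{n}=b\,(b+1)_{n-1}$ and $(b)_{n+1}=(b+n)(b)_n$ collapses the two leftover sums to the left-hand side exactly as you say. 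What your approach buys is that it proves the contiguous relation termwise, uniformly in the parameters, so it specializes immediately to the terminating case $c=-c'$, $e=-e'$ with $0\le c'\le e'$ that the thesis actually needs --- replacing the paper's two lemmas and the induction on $b$ in one stroke; what the paper's inductive route buys, arguably, is that it never has to discuss convergence, since every series in sight is a finite sum. Your analytic caveat is the right one to flag: for non-terminating series the split into $\sum\bigl(1-\tfrac{(a+n)(c+n)}{(d+n)(e+n)}\bigr)(\cdots)$ requires the slightly stronger condition $\operatorname{Re}(d+e-a-b-c)>1$ for both pieces to converge separately, after which analytic continuation in the parameters gives the stated generality; and the divisions by $b$, $c$ are removable by continuity (for $c=0$ both sides are visibly equal to $1$). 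None of this affects the terminating application, so the proof stands.
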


We actually need the analogous formula when $e-a$ is a non-positive
integer.
\begin{thm}
Let $_{3}F_{2}\left(\begin{array}{ccc}
a, & b, & -c\\
d, & -e
\end{array};1\right)$ be the generalized hypergeometric function with parameters $a,b,c,d,e\in\mathbb{Z}$
satisfying $a\geq0,b\geq-1,d>0$ and $0\leq c\leq e$. Then:\label{theorem 3.8}

\[
_{3}F_{2}\left(\begin{array}{ccc}
a, & b, & -c\\
d, & -e
\end{array};1\right)
\]

\[
=\frac{-c\left(e+a\right)}{de}\mbox{}_{3}F_{2}\left(\begin{array}{ccc}
a, & b+1 & -c+1\\
d+1 & -e+1
\end{array};1\right)+\frac{d+c}{d}\mbox{}_{3}F_{2}\left(\begin{array}{ccc}
a, & b+1 & -c\\
d+1, & -e
\end{array};1\right).
\]
\end{thm}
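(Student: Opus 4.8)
The plan is to prove this identity by a finite-summation manipulation that parallels the proof of Theorem~\ref{thm:-10}, treating both sides as explicit finite sums (the $_{3}F_{2}$'s are polynomials since $-c$ is a non-positive integer, so convergence is not an issue). Write $S=\mbox{}_{3}F_{2}\left(\begin{array}{ccc}a, & b, & -c\\ d, & -e\end{array};1\right)=\sum_{n=0}^{c}\frac{(a)_{n}(b)_{n}(-c)_{n}}{(d)_{n}(-e)_{n}\,n!}$. First I would convert every Pochhammer symbol to a binomial coefficient via the identity $(-z)_{n}=(-1)^{n}\binom{z}{n}\,n!$ (for the negative-integer parameters) and $(w)_{n}=\binom{w+n-1}{n}n!$ (for the others) — exactly the rewriting used in the proof of Theorem~3.6 — so that $S$ becomes a sum of ratios of binomial coefficients with no factorials floating around. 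The two terms on the right-hand side get the same treatment; note that the shifts $b\mapsto b+1$, $c\mapsto c-1$, $d\mapsto d+1$, $e\mapsto e-1$ are precisely the shifts that keep all the admissibility constraints ($a\geq 0$, $b\geq-1$, $d>0$, $0\leq c\leq e$) intact, so each $_{3}F_{2}$ appearing is a legitimate polynomial of the same type.

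Next I would argue term by term: fix the summation index $n$ and show that the $n$-th summand of $S$ equals the sum of the $n$-th summands contributed by the two right-hand pieces, after a suitable reindexing. The natural move, mirroring the earlier proof, is to split $\binom{a+1}{n}$ — or rather to run the argument the other direction — but here the cleaner route is Pascal's rule applied to one of the binomial coefficients in the $n$-th term of $S$, say $\binom{d+n-1}{n}$ in the denominator handled via $\frac{1}{\binom{d-1+n}{n}}$-type manipulations, or to the $\binom{c}{n}$ coming from $(-c)_{n}$. Writing $\binom{c}{n}=\binom{c-1}{n}+\binom{c-1}{n-1}$ breaks $S$ into two sums; the $\binom{c-1}{n}$ part will, after absorbing the elementary factors $\frac{(a)_n}{(d)_n}$ etc., reproduce $\frac{d+c}{d}\,{}_{3}F_{2}\!\left(\begin{array}{ccc}a, & b+1 & -c\\ d+1, & -e\end{array};1\right)$, and the $\binom{c-1}{n-1}$ part, after the shift $n\mapsto n+1$, will reproduce $\frac{-c(e+a)}{de}\,{}_{3}F_{2}\!\left(\begin{array}{ccc}a, & b+1 & -c+1\\ d+1 & -e+1\end{array};1\right)$. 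The prefactors $\frac{d+c}{d}$ and $\frac{-c(e+a)}{de}$ are exactly the ratios $\frac{(b)_{n}}{(b+1)_{n}}\cdot(\text{stuff})$ collapse to, and the appearance of $b+1$ rather than $b$ on the right is forced by the identity $(b)_{n}=\frac{b}{b+n}(b+1)_{n}$ combined with a Pascal step on $\binom{b+n-1}{n}$.

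The main obstacle I anticipate is bookkeeping: getting the four parameter shifts, the index shift $n\mapsto n+1$, and the collapse of the elementary rational prefactors all to line up simultaneously and produce exactly the constants $\frac{-c(e+a)}{de}$ and $\frac{d+c}{d}$ with the correct signs — the minus sign in $\frac{-c(e+a)}{de}$ comes from the $(-1)^{n}$ in $(-c)_{n}$ and $(-e)_{n}$ not quite cancelling after the reindex, and it is easy to drop or double-count it. A secondary subtlety is the edge behaviour: one should check that the $n=0$ term of $S$ is absorbed entirely by the second right-hand term (the first term's sum starts effectively at the reindexed bottom), and that when $c=0$ or $c=e$ the formula degenerates correctly (both sides equal $1$ when $c=0$, since then $-c=0$ kills every term with $n\geq 1$). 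Once the term-by-term matching is verified for a generic index $n$, summing over $n$ from $0$ to $c$ gives the claimed identity, and no induction is needed — unlike Theorem~3.6, this is a pure rearrangement of a single finite sum.
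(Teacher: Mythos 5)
Your plan hinges on the claim that the identity can be verified ``term by term'' after a single application of Pascal's rule to $\binom{c}{n}$ and a reindex $n\mapsto n+1$, with the remaining parameter shifts absorbed into the constant prefactors $\tfrac{d+c}{d}$ and $\tfrac{-c(e+a)}{de}$. This is where the argument breaks down. Writing $\binom{c}{n}=\binom{c-1}{n}+\binom{c-1}{n-1}$ splits $S$ into ${}_{3}F_{2}\!\left(a,b,-c+1;d,-e;1\right)$ plus a reindexed sum, but the first piece has upper parameter $-c+1$ with $b,d,e$ \emph{unshifted}, whereas the right-hand term you want it to become, $\tfrac{d+c}{d}\,{}_{3}F_{2}\!\left(a,b+1,-c;d+1,-e;1\right)$, has upper parameter $-c$ and shifted $b$ and $d$. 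These are not constant multiples of one another: the ratios $(b+1)_{n}/(b)_{n}=(b+n)/b$ and $(d)_{n}/(d+1)_{n}=d/(d+n)$ depend on $n$ and cannot be ``absorbed'' into a prefactor. A concrete check with $a=b=c=d=e=1$ shows the failure: the left side is $2$ with summands $T_{0}=T_{1}=1$, while the two right-hand terms contribute summands summing to $U_{0}+V_{0}=0$ and $U_{1}+V_{1}=2$; the totals agree but no fixed-$n$ (or shift-by-one) matching holds, and your Pascal split produces pieces equal to $1$ and $1$ rather than the required $4$ and $-2$. The identity is a genuine three-term contiguous relation whose validity depends on cancellation \emph{across} different values of $n$, so the mechanism you describe cannot close the proof; you would need an explicit telescoping certificate, which you have not produced.

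For comparison, the paper takes a different and more roundabout route: Lemma 3.10 uses the elementary identity $(a)_{n}=n\sum_{t=1}^{a}(t)_{n-1}$ to rewrite each ${}_{3}F_{2}$ as $1$ plus a sum over an auxiliary index $t$ of ${}_{3}F_{2}$'s with all of $b,c,d,e$ shifted by one; Lemma 3.11 then recasts the claimed identity as an equivalent identity between such sums; and the theorem is proved by induction on $b$, with base case $b=-1$ (where $(b)_{n}$ vanishes for $n\geq2$ and both sides reduce to the directly checkable equality $1-\tfrac{ac}{de}=-\tfrac{c(a+e)}{de}+\tfrac{d+c}{d}$). Your instinct that no induction should be needed is not unreasonable in principle, but as written your argument does not supply the cross-term cancellation that replaces it.
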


The proof will use two lemmas.

\break
\begin{lem}
For $a,b,c,d,e$ as above,

\[
_{3}F_{2}\left(a,b,-c;d,-e;1\right)=1+\frac{bc}{de}\sum_{t=1}^{a}\mbox{ \ensuremath{_{3}F_{2}\left(\begin{array}{c}
t,\mbox{ }b+1,\mbox{ \ensuremath{-c+1}}\mbox{ }\\
d+1,\mbox{ \ensuremath{-e+1}}
\end{array};1\right)}}
\]
\end{lem}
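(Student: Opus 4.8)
The plan is to expand both generalized hypergeometric functions on the right-hand side of the claimed identity as explicit finite sums, and then rearrange the double sum on the left-hand side by a change of summation order so that the inner sum telescopes. Concretely, recall from the definition that
\[
_{3}F_{2}\left(a,b,-c;d,-e;1\right)=\sum_{n=0}^{c}\frac{(a)_{n}(b)_{n}(-c)_{n}}{(d)_{n}(-e)_{n}\,n!},
\]
a finite sum since $-c$ is a non-positive integer (and $0\le c\le e$ guarantees the denominators never vanish before termination). First I would separate off the $n=0$ term, which contributes $1$; this explains the ``$1+$'' in the statement. For the remaining terms $n\ge 1$, the factor $(a)_n$ may be written as $a\,(a+1)_{n-1}$, and similarly I will peel one factor off of $(b)_n$, $(-c)_n$, $(d)_n$, $(-e)_n$ to produce the shifted parameters $b+1$, $-c+1$, $d+1$, $-e+1$ appearing on the right. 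This yields a prefactor of $a\cdot b\cdot(-c)/(d\cdot(-e)\cdot 1)=abc/(de)$ times a sum of the shape $\sum_{n\ge 1}(a+1)_{n-1}(\cdots)/(n-1)!$; re-indexing $n\mapsto n+1$ turns this into $\sum_{n\ge 0}$, i.e.\ into $_{3}F_{2}(a+1,b+1,-c+1;d+1,-e+1;1)$.

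The crux is then to replace the single factor $(a+1)_{n}$ (coming from $a\,(a+1)_{n-1}$ after re-indexing, but I will actually want it before re-indexing, so $(a+1)_{n-1}$) by a telescoping sum that produces the parameter $t$ running from $1$ to $a$ in the statement. The key identity is the \emph{discrete derivative / summation-by-parts} relation for Pochhammer symbols: for the relevant range,
\[
(a+1)_{n-1}-(1)_{n-1}=(a+1)_{n-1}-(n-1)!,
\]
and more usefully the telescoping decomposition
\[
\frac{(a)_n}{n!}-\frac{(a)_{n-1}}{(n-1)!} \quad\text{or}\quad (a+1)_{m}-(1)_m=\sum_{t=1}^{a}\bigl[(t+1)_m-(t)_m\bigr],
\]
each summand of which equals $m\,(t+1)_{m-1}$ by the elementary identity $(t+1)_m-(t)_m=m(t+1)_{m-1}$ (verify by writing $(t+1)_m=(t+m)!/t!$). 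Substituting this telescoping expansion into the sum and swapping the order of summation (finite sums, so no convergence issue) converts the single $_{3}F_{2}$ into $\sum_{t=1}^{a}$ of a $_{3}F_{2}$ with top parameter $t$ and the shifted parameters $b+1,-c+1,d+1,-e+1$ in the lower slots as claimed, and the factor $m$ cancels against the $1/n!$-type denominator exactly as needed to land on the normalization in the statement.

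I expect the main obstacle to be bookkeeping of the index shifts and the constant prefactors: matching $abc/(de)$ against the stated $bc/(de)$ requires that the telescoping step absorb precisely one factor of $a$ (i.e.\ the sum $\sum_{t=1}^{a}1$ replaces the Pochhammer factor $a$, not $(a)_n$ wholesale), so I must be careful to peel the $a$ off \emph{first} as $(a)_n=a\,(a+1)_{n-1}$ and only then telescope $(a+1)_{n-1}$ in the variable $t$. A secondary point requiring care is to confirm that all lower parameters $d+1$ and $-e+1$ stay admissible (nonzero, and for the $-e+1$ slot the termination index $c-1\le e-1$ keeps it from vanishing), which follows from the hypotheses $d>0$ and $0\le c\le e$; and that the $b\ge -1$ hypothesis is exactly what is needed so that $b+1\ge 0$ and the factor $(b)_n$ can be written as $b\,(b+1)_{n-1}$ without sign or vanishing pathologies. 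Once these are in place, the identity of the lemma follows by direct comparison of the two finite double sums term by term.
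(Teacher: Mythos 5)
Your overall plan --- expand the terminating series, split off the $n=0$ term, and convert the factor $(a)_{n}$ into a telescoping sum over a new index $t$ --- is the right one, and it is essentially the paper's strategy. But two concrete steps in your execution would fail. First, the intermediate claim that peeling one factor off each Pochhammer symbol turns the tail of the series into $\frac{abc}{de}\,{}_{3}F_{2}\left(a+1,b+1,-c+1;d+1,-e+1;1\right)$ is false: the factorial peels as $n!=n\cdot(n-1)!$, so after re-indexing $m=n-1$ a stray factor $\frac{1}{m+1}=\frac{(1)_{m}}{(2)_{m}}$ survives and the shifted series is a ${}_{4}F_{3}$ (extra upper parameter $1$, extra lower parameter $2$), not a ${}_{3}F_{2}$. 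Second, and more seriously, the order of operations you prescribe --- peel $a$ off first as $(a)_{n}=a\,(a+1)_{n-1}$ and only then telescope $(a+1)_{n-1}$ --- misaligns the indices: applying your identity $(t+1)_{m}-(t)_{m}=m\,(t+1)_{m-1}$ with $m=n-1$ gives $(a+1)_{n-1}=(n-1)!+(n-1)\sum_{t=1}^{a}(t+1)_{n-2}$, whose Pochhammer symbols sit at order $n-2$ (and with top parameter $t+1$), while the peeled factors $(b+1)_{n-1},(-c+1)_{n-1},(d+1)_{n-1},(-e+1)_{n-1}$ all sit at order $n-1$. These sums do not recombine into the stated $\sum_{t=1}^{a}{}_{3}F_{2}\left(t,b+1,-c+1;d+1,-e+1;1\right)$, and no factor of $a$ is ``absorbed'' by counting $\sum_{t=1}^{a}1$; the $t$-dependence must live inside the top parameter of each ${}_{3}F_{2}$.

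The repair is to telescope $(a)_{n}$ wholesale rather than $(a+1)_{n-1}$. Summing $(t)_{n}-(t-1)_{n}=n\,(t)_{n-1}$ (equivalent to your identity after relabelling) over $t=1,\ldots,a$ gives $(a)_{n}=n\sum_{t=1}^{a}(t)_{n-1}$ for $n\ge1$, the boundary term vanishing because $(0)_{n}=0$. Substituting this for $(a)_{n}$ in the series supplies exactly the factor $n$ needed to cancel against $n!=n\,(n-1)!$, drops the $t$-Pochhammer to order $n-1$ in alignment with the four peeled parameters, and leaves the prefactor $\frac{b\cdot(-c)}{d\cdot(-e)}=\frac{bc}{de}$ with no extraneous $a$. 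This is precisely the paper's proof; with that reordering your argument goes through, and your remarks on the admissibility of the shifted lower parameters under $d>0$ and $0\le c\le e$ are fine.
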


\begin{proof}
The binomial identity 

\[
\left(\begin{array}{c}
t+n-1\\
n
\end{array}\right)=\left(\begin{array}{c}
t+n-2\\
n
\end{array}\right)+\left(\begin{array}{c}
t+n-2\\
n-1
\end{array}\right)
\]

may be written, in terms of Pochhammer symbols, as

\[
\left(t\right)_{n}-\left(t-1\right)_{n}=n\left(t\right)_{n-1}
\]

Summation over values of $t$ from 1 to $a$ yields, for any positive
integers $a$ and $n:$

\[
\left(a\right)_{n}=n\sum_{t=1}^{a}\left(t\right)_{n-1}.
\]

Now, by definition 

\[
_{3}F_{2}\left(a,b,-c;d,-e;1\right)=\sum_{n=0}^{c}\frac{\left(a\right)_{n}\left(b\right)_{n}\left(-c\right)_{n}}{\left(d\right)_{n}\left(-e\right)_{n}n!}.
\]

Thus we get

\[
_{3}F_{2}\left(a,b,-c;d,-e;1\right)=1+\sum_{n=1}^{c}\frac{\left(a\right)_{n}\left(b\right)_{n}\left(-c\right)_{n}}{\left(d\right)_{n}\left(-e\right)_{n}n!}
\]

\[
=1+\sum_{n=1}^{c}\sum_{t=1}^{a}\frac{n\left(t\right)_{n-1}\left(b\right)_{n}\left(-c\right)_{n}}{\left(d\right)_{n}\left(-e\right)_{n}n!}
\]

\[
=1+\sum_{n=1}^{c}\sum_{t=1}^{a}\frac{\left(t\right)_{n-1}b\left(b+1\right)_{n-1}\left(-c\right)\left(-c+1\right)_{n-1}}{d\left(d+1\right)_{n-1}\left(-e\right)\left(-e+1\right)_{n}\left(n-1\right)!}
\]

\[
=1+\frac{bc}{de}\sum_{t=1}^{a}\mbox{}_{3}F_{2}\left(t,b+1,-c+1;d+1,-e+1;1\right).
\]
\end{proof}
\begin{lem}
The equality

\[
_{3}F_{2}\left(\begin{array}{ccc}
a, & b, & -c,\\
d, & -e,
\end{array};1\right)
\]

\[
=\frac{c\left(-e-a\right)}{de}{}_{3}F_{2}\left(\begin{array}{ccc}
a, & b+1, & -c+1\\
d+1, & -e+1,
\end{array};1\right)+\frac{d+c}{d}\mbox{ }_{3}F_{2}\left(\begin{array}{ccc}
a, & b+1, & -c\\
d+1, & -e
\end{array};1\right).
\]
\end{lem}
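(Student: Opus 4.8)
The plan is to induct on the nonnegative integer $c$, using the preceding lemma to convert the dependence on $a$ into a first-order recurrence. Write $F(a)={}_{3}F_{2}(a,b,-c;d,-e;1)$, $G(a)={}_{3}F_{2}(a,b+1,-c+1;d+1,-e+1;1)$ and $H(a)={}_{3}F_{2}(a,b+1,-c;d+1,-e;1)$, regarded as functions of $a$ with $b,c,d,e$ fixed, and let $R(a)=\frac{c(-e-a)}{de}G(a)+\frac{d+c}{d}H(a)$ be the right-hand side of the asserted identity; the goal is $F(a)=R(a)$.

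First I would read off the one-step recurrences contained in the preceding lemma. Differencing its telescoping sum gives, for $a\ge 1$,
\[ F(a)-F(a-1)=\frac{bc}{de}\,G(a). \]
Applying the same lemma after the admissible substitutions $(b,d)\mapsto(b+1,d+1)$ and $(b,c,d,e)\mapsto(b+1,c-1,d+1,e-1)$ gives, respectively, with $K(a)={}_{3}F_{2}(a,b+2,-c+2;d+2,-e+2;1)$ and $L(a)={}_{3}F_{2}(a,b+2,-c+1;d+2,-e+1;1)$,
\[ H(a)-H(a-1)=\frac{(b+1)c}{(d+1)e}\,L(a),\qquad G(a)-G(a-1)=\frac{(b+1)(c-1)}{(d+1)(e-1)}\,K(a), \]
both valid once $c\ge 1$, since then the shifted parameters still satisfy $0\le c-1\le e-1$.

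For the induction, the base case $c=0$ is immediate: the coefficient $\frac{c(-e-a)}{de}$ vanishes, so the identity reduces to $F(a)=H(a)$, and both sides equal $1$ because the upper parameter $-c=0$ truncates each series after its $n=0$ term. For the step, fix $c\ge 1$ and assume the lemma for $c-1$ and all admissible $b,d,e$; applied to the parameters $(b+1,c-1,d+1,e-1)$ it reads
\[ G(a)=\frac{(c-1)(1-e-a)}{(d+1)(e-1)}\,K(a)+\frac{d+c}{d+1}\,L(a). \]
I would then compute $R(a)-R(a-1)$: write $G(a-1)=G(a)-\bigl(G(a)-G(a-1)\bigr)$, substitute the three one-step recurrences above, collect the $K(a)$- and $L(a)$-terms and factor out $\frac{c(b+1)}{de}$, and use the displayed induction hypothesis to replace the remaining combination by $G(a)$. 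Everything collapses to $R(a)-R(a-1)=\bigl(-\frac{c}{de}+\frac{c(b+1)}{de}\bigr)G(a)=\frac{bc}{de}G(a)=F(a)-F(a-1)$. Since also $R(0)=1=F(0)$, summing these equalities over $a$ yields $F(a)=R(a)$ for every admissible $a$, which closes the induction.

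The fussiest step is the bookkeeping in the previous paragraph: one must verify that, once the induction hypothesis is substituted, the net coefficient of $G(a)$ is exactly $\frac{bc}{de}$ and that the $K(a)$- and $L(a)$-contributions cancel in full. A lesser nuisance is a short list of boundary parameter values ($c\in\{0,1\}$, $b=-1$, $c=e$, $e=1$) at which one of the displayed fractions is formally $\frac{0}{0}$; in each of these the relevant ${}_{3}F_{2}$ is identically $1$ or is multiplied by a genuinely vanishing integer factor, and the claim follows by direct substitution rather than from the general argument.
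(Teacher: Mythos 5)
Your proof is correct, but it takes a genuinely different route from the paper's. (A remark first: the displayed equality is really the content of Theorem 3.9 --- in the source the \verb|\end{lem}| of Lemma 3.11 is misplaced, so the ``lemma'' you were shown is just the recurrence itself, and what you have produced is an alternative proof of Theorem 3.9.) The paper proves this identity by induction on $b$, starting from the base case $b=-1$ (where both sides reduce to $1-\frac{ac}{de}$) and climbing upward; the step requires the auxiliary equivalence of Lemma 3.11, three applications of Lemma 3.10, a summation over $t$, and a final ``write the identity for $a$ and for $a-1$, subtract, and divide by $b+1$'' manoeuvre, at the end of which the claim for $b+1$ appears with $c$, $d$, $e$ shifted. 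You instead induct on $c$, with the trivial base case $c=0$, and organize the step as: both sides satisfy the same first-order recurrence $\Phi(a)-\Phi(a-1)=\frac{bc}{de}\,{}_{3}F_{2}(a,b+1,-c+1;d+1,-e+1;1)$ with the same initial value $\Phi(0)=1$. The common engine is Lemma 3.10 (the telescoping identity in $a$), which you use in three instantiations exactly as the paper does, but your organization dispenses with the equivalence lemma entirely and makes the cancellation transparent: after substituting the induction hypothesis for $G(a)$, the $K$- and $L$-terms recombine into $\frac{c(b+1)}{de}G(a)$ and the net coefficient is $\frac{bc}{de}$, as you verified. What your route buys is brevity and a cleaner base case; what the paper's route buys is the intermediate ``summed'' form of the identity, which it records explicitly. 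Your closing caveat about the degenerate parameter values ($c\in\{0,1\}$, $e=1$, $b=-1$, $c=e$) is apt and handled correctly: in each case the offending $\frac{0}{0}$ multiplies a series that is identically $1$ or is killed by a genuinely vanishing integer factor, consistent with Definition 3.4, which truncates the sum at $|a_1|$ before any zero denominator is reached.
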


is equivalent to the equality 

\[
a+b\sum_{n=1}^{a}{}_{3}F_{2}\left(\begin{array}{ccc}
n, & b+1, & -c+1,\\
d+1, & -e+1,
\end{array};1\right)
\]

\[
=\sum_{n=1}^{a}\frac{\left(b+1\right)\left(d+c\right)}{\left(d+1\right)}{}_{3}F_{2}\left(\begin{array}{ccc}
n, & b+2, & -c+1,\\
d+2, & -e+1,
\end{array};1\right)
\]

\[
+\sum_{n=1}^{a}\frac{\left(b+1\right)\left(-e-a\right)\left(1-c\right)}{\left(d+1\right)\left(-e+1\right)}{}_{3}F_{2}\left(\begin{array}{ccc}
n, & b+2, & -c+2,\\
d+2, & -e+2,
\end{array};1\right).
\]

\begin{proof}
By Lemma 3.10,

\[
_{3}F_{2}\left(\begin{array}{ccc}
a, & b, & -c,\\
d, & -e,
\end{array};1\right)=1+\frac{bc}{de}\sum_{t=1}^{a}{}_{3}F_{2}\left(\begin{array}{c}
t,\mbox{ }b+1,\mbox{ \ensuremath{-c+1}}\mbox{ }\\
d+1,\mbox{ \ensuremath{-e+1}}
\end{array};1\right),
\]

\[
\frac{c\left(-e-a\right)}{de}{}_{3}F_{2}\left(\begin{array}{ccc}
a, & b+1, & -c+1\\
d+1, & -e+1,
\end{array};1\right)=
\]

\[
=\frac{c\left(-e-a\right)}{de}\left(1+\frac{\left(b+1\right)\left(-c+1\right)}{\left(d+1\right)\left(-e+1\right)}\sum_{t=1}^{a}{}_{3}F_{2}\left(\begin{array}{ccc}
t, & b+2, & -c+2\\
d+2, & -e+2,
\end{array};1\right)\right)
\]

and

\[
\frac{d+c}{d}\mbox{}_{3}F_{2}\left(\begin{array}{ccc}
a, & b+1, & -c\\
d+1, & -e
\end{array};1\right)=
\]

\[
=\frac{d+c}{d}\left(1+\frac{\left(b+1\right)\left(-c\right)}{\left(d+1\right)\left(-e\right)}\sum_{t=1}^{a}\mbox{ }_{3}F_{2}\left(\begin{array}{ccc}
t, & b+2, & -c+1\\
d+2, & -e+1
\end{array};1\right)\right).
\]

Since

\[
1-\frac{c\left(-e-a\right)}{de}-\frac{d+c}{d}=\frac{ac}{de}.
\]

we get that

\[
\frac{ac}{de}+\frac{bc}{de}\sum_{t=1}^{a}\mbox{ }{}_{3}F_{2}\left(\begin{array}{c}
t,\mbox{ }b+1,\mbox{ \ensuremath{-c+1}}\mbox{ }\\
d+1,\mbox{ \ensuremath{-e+1}}
\end{array};1\right)
\]

\[
=\frac{c\left(-e-a\right)\left(b+1\right)\left(-c+1\right)}{\left(d+1\right)\left(-e+1\right)de}\sum_{t=1}^{a}{}_{3}F_{2}\left(\begin{array}{ccc}
t, & b+2, & -c+2\\
d+2, & -e+2,
\end{array};1\right)
\]

\[
+\frac{\left(d+c\right)\left(b+1\right)\left(-c\right)}{d\left(d+1\right)\left(-e\right)}\sum_{t=1}^{a}\mbox{}_{3}F_{2}\left(\begin{array}{ccc}
t, & b+2, & -c+1\\
d+2, & -e+1
\end{array};1\right)
\]

Dividing by $\frac{c}{de}$ yields the required equality.
\end{proof}
$ $
\begin{proof}
$of$ $Theorem$ $3.9.$ By induction on $b$.

For $b=-1$, indeed

\[
1-\frac{ac}{de}=-\frac{c\left(a+e\right)}{de}+\frac{d+c}{d}.
\]

Assume that for a certain value of $b$ (and all admissible values
of the other parameters)

\[
_{3}F_{2}\left(\begin{array}{ccc}
a, & b, & -c,\\
d, & -e,
\end{array};1\right)
\]

\[
=\frac{c\left(-e-a\right)}{de}{}_{3}F_{2}\left(\begin{array}{ccc}
a, & b+1, & -c+1\\
d+1, & -e+1,
\end{array};1\right)+\frac{d+c}{d}\mbox{}_{3}F_{2}\left(\begin{array}{ccc}
a, & b+1, & -c\\
d+1, & -e
\end{array};1\right).
\]

By lemma 3.11, this is equivalent to

\[
a+b\sum_{n=1}^{a}{}_{3}F_{2}\left(\begin{array}{ccc}
n, & b+1, & -c+1,\\
d+1, & -e+1,
\end{array};1\right)
\]

\[
=\sum_{n=1}^{a}\frac{\left(b+1\right)\left(d+c\right)}{\left(d+1\right)}{}_{3}F_{2}\left(\begin{array}{ccc}
n, & b+2, & -c+1,\\
d+2, & -e+1,
\end{array};1\right)
\]

\[
-\sum_{n=1}^{a}\frac{\left(b+1\right)\left(-e-a\right)\left(c-1\right)}{\left(d+1\right)\left(-e+1\right)}{}_{3}F_{2}\left(\begin{array}{ccc}
n, & b+2, & -c+2,\\
d+2, & -e+2,
\end{array};1\right).
\]

By Lemma 3.10,

\[
_{3}F_{2}\left(\begin{array}{ccc}
t, & b+1, & -c+1,\\
d+1, & -e+1,
\end{array};1\right)
\]

\[
=1-\sum_{n=1}^{t}\frac{\left(b+1\right)\left(c-1\right)}{\left(d+1\right)\left(-e+1\right)}\mbox{ }_{3}F_{2}\left(\begin{array}{ccc}
n, & b+2, & -c+2,\\
d+2, & -e+2,
\end{array};1\right).
\]

Summing this for values of $t$ between $1$ and $a$ we get

\[
-a+\sum_{t=1}^{a}{}_{3}F_{2}\left(\begin{array}{ccc}
t, & b+1, & -c+1,\\
d+1, & -e+1,
\end{array};1\right)
\]

\[
=-\sum_{n=1}^{a}\frac{\left(a-n+1\right)\left(b+1\right)\left(c-1\right)}{\left(d+1\right)\left(-e+1\right)}{}_{3}F_{2}\left(\begin{array}{ccc}
n, & b+2, & -c+2,\\
d+2, & -e+2,
\end{array};1\right)
\]

Adding this to the previous formula we get

\[
\left(b+1\right)\sum_{n=1}^{a}{}_{3}F_{2}\left(\begin{array}{ccc}
n, & b+1, & -c+1,\\
d+1, & -e+1,
\end{array};1\right)
\]

\[
=\sum_{n=1}^{a}\frac{\left(b+1\right)\left(d+c\right)}{\left(d+1\right)}{}_{3}F_{2}\left(\begin{array}{ccc}
n, & b+2, & -c+1,\\
d+2, & -e+1,
\end{array};1\right)
\]

\[
-\sum_{n=1}^{a}\frac{\left(b+1\right)\left(-e-n+1\right)\left(c-1\right)}{\left(d+1\right)\left(-e+1\right)}{}_{3}F_{2}\left(\begin{array}{ccc}
n, & b+2, & -c+2,\\
d+2, & -e+2,
\end{array};1\right).
\]

Writing this equality for $a$ and for $a-1$ subtracting and dividing
by $b+1$, we get 

\[
_{3}F_{2}\left(\begin{array}{ccc}
a, & b+1, & -c+1,\\
d+1, & -e+1,
\end{array};1\right)
\]

\[
=\frac{\left(d+c\right)}{\left(d+1\right)}{}_{3}F_{2}\left(\begin{array}{ccc}
a, & b+2, & -c+1,\\
d+2, & -e+1,
\end{array};1\right)
\]

\[
-\frac{\left(-e-a+1\right)\left(c-1\right)}{\left(d+1\right)\left(-e+1\right)}{}_{3}F_{2}\left(\begin{array}{ccc}
a, & b+2, & -c+2,\\
d+2, & -e+2,
\end{array};1\right).
\]

This is the claimed equality for $b+1$ with $-c$, $-e$ and $d$
increased by 1
\end{proof}
\newpage

\section{Enumeration of $SYT$ of battery shape $\left[\left(m^{n}\right),a,2\right]$}

From now on we will give explicit formulas for the number of $SYT$
of battery shape $\left[\left(m^{n}\right),a,k\right]$ for small
values of $k.$ In this chapter we focus on the shape $\left[\left(m^{n}\right),a,2\right]$.
We will use the machinery of hypergeometric functions. 

\subsection{Main theorem \label{subsec:Enumerate-battery-tableau}}
\begin{thm}
The number of SYT of battery shape $\left[\left(m^{n}\right),a,2\right]$
is equal to 

\[
f^{\left(m^{n}\right)}\mbox{}_{3}F_{2}\left(a,m,-n;1,-mn;1\right)
\]

where $f^{\left(m^{n}\right)}$ is the number of $SYT$ of rectangular
shape $\left(m^{n}\right)$ and $_{3}F_{2}$ is a generalized hypergeometric
function.
\end{thm}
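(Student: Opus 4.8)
The plan is to set up a bijective/recursive decomposition of $SYT([(m^n),a,2])$ according to the position of the "boundary" between the attached column and the rectangle, and to show that the resulting sum matches the series defining $f^{(m^n)}\,{}_3F_2(a,m,-n;1,-mn;1)$ term by term. Concretely, in a tableau $T$ of battery shape $[(m^n),a,2]$, the attached column sits above the top cell of column $2$ of the $m\times n$ rectangle. I would look at which entries of $[n+a]$ land in the attached column of length $a$: these form an increasing sequence, and the key local constraint is the interaction of the bottom cell of the attached column with the cell immediately below it (the top cell of column $2$ of the rectangle) and with the top cell of column $1$. The natural parameter to sum over is $j$, the number of rectangle-cells that are "small" relative to all of the attached column — equivalently, one extracts from the hook-length count of the rectangle the cells in column $1$ and column $2$ that must precede the attached column.

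The main computational step is to show that the number of $SYT([(m^n),a,2])$ equals
\[
\sum_{j=0}^{n} c_j \, f^{(m^n)},
\]
where $c_j$ is an explicit rational function of $a,m,n$, and then to identify $c_j$ with the hypergeometric term
\[
\frac{(a)_j\,(m)_j\,(-n)_j}{(1)_j\,(-mn)_j\,j!}.
\]
I expect $c_j$ to arise as follows: choosing where the attached column "cuts into" the first two columns of the rectangle contributes ratios of binomial coefficients, and converting $\binom{-n+j-1}{j}=(-1)^j\binom{n}{j}$ and similar identities (exactly the identity already used in the proof of Theorem~\ref{thm:-10}) turns those binomials into Pochhammer symbols. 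The presence of $(-n)_j$ and $(-mn)_j$ strongly suggests that $j$ ranges over a choice of $j$ rows out of $n$ (for column $2$) against a choice out of all $mn$ rectangle cells, while $(a)_j$ and $(m)_j$ come from rising-factorial counts of interleavings of the $a$ attached entries with $j$ designated cells and with the $m$ cells of a rectangle row.

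An alternative, and probably cleaner, route is to avoid re-deriving $c_j$ from scratch and instead use the Naruse-type or Aitken–Feit–Steinberg determinant/Lindström–Gessel–Viennot description of SYT counts of (skew, truncated) shapes: express $f^{[(m^n),a,2]}$ as a sum over lattice-path configurations, peel off the unique path forced through the attached column, and recognize the remaining weighted sum as a single-sum hypergeometric evaluation. In either approach, the hard part is purely bookkeeping: pinning down $c_j$ with the correct normalization so that the $j=0$ term is exactly $f^{(m^n)}$ and the ratio $c_{j+1}/c_j$ equals $\frac{(a+j)(m+j)(-n+j)}{(1+j)(-mn+j)(j+1)}$, matching the defining recurrence of ${}_3F_2$. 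Once the ratio is verified, the sum is finite because the factor $(-n)_j$ vanishes for $j>n$, consistent with the polynomial convention for ${}_pF_q$ with a non-positive upper parameter introduced above, and the theorem follows. The contiguous-relation identities for ${}_3F_2$ proved in Theorems~\ref{theorem 3.6} and~\ref{theorem 3.8} are then exactly what is needed afterwards to extract closed multiplicative formulas for special parameter values, but they are not required for the main identity itself.
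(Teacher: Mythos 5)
Your overall skeleton coincides with the paper's: the paper also sums over a single parameter, namely the entry $a+t$ of the bottom cell of the attached column (the ``pivot''), equivalently the number $t$ of rectangle cells whose entries precede it, and then matches the resulting sum with the series for ${}_3F_2(a,m,-n;1,-mn;1)$ term by term. But your proposal stops exactly where the actual content of the proof begins: you never determine $c_j$, and you explicitly defer it as ``bookkeeping'' while only guessing (partly incorrectly) where the five Pochhammer factors come from. The two missing ingredients are concrete. First, one must observe that the rectangle cells with entries smaller than the pivot entry are forced to lie in a top-justified prefix of length $t$ of the \emph{first} column of the rectangle (not the second, as your heuristic suggests --- every cell of column $2$ lies directly below the pivot and hence carries a larger entry), and that the cells with larger entries therefore form, after $180^{\circ}$ rotation and complementation $i\leftrightarrow N+1-i$, a standard Young tableau of the ordinary shape $\left(m^{n-t},(m-1)^{t}\right)$. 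Second, one must compute the hook-length ratio
\[
\frac{f^{\left(m^{n-t},(m-1)^{t}\right)}}{f^{\left(m^{n}\right)}}=\frac{\binom{n}{t}\binom{m+t-1}{t}}{\binom{mn}{t}},
\]
which is where $(m)_t$, $(-n)_t$ and $(-mn)_t$ actually originate; the remaining factor $\binom{a+t-1}{t}=(a)_t/t!$ counts the interleavings of the $a-1$ entries above the pivot with the $t$ small first-column entries. Without these two steps you cannot even justify your assumption that each summand is a rational multiple $c_j$ of $f^{(m^n)}$, let alone verify the term ratio $\frac{(a+j)(m+j)(-n+j)}{(1+j)(-mn+j)(j+1)}$.

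Your alternative Lindstr\"om--Gessel--Viennot route is likewise only named, not developed; for a truncated skew shape such as this one the path model is not off-the-shelf, so invoking it does not close the gap. You are also right that Theorems \ref{thm:-10} and \ref{theorem 3.8} are not needed for this identity --- the paper uses them only afterwards to extract closed-form evaluations --- but that observation does not substitute for the missing computation.
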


\begin{proof}
Consider the battery shape $\left[\left(m^{n}\right),a,2\right]$

\[
\begin{array}{c}
\left[\begin{array}{cccc}
 & \diamond\\
 & \diamond\\
 & \star\\
\bullet & \circ & \circ & \circ\\
\bullet & \circ & \circ & \circ\\
\circ & \circ & \circ & \circ
\end{array}\right]\\
\mbox{Figure 1}
\end{array}
\]

The cell immediately above the shape $\left(m^{n}\right)$ will be
called the pivot cell. It is denoted by a star $\left(\star\right)$
in Figure 1. We will sum over all possible entries of the pivot cell.
This entry is at least $a$.

If the entry is $a+t,$ then $t$ is the number of cells in the first
column of the shape whose entry is smaller than $a+t$. These entries
are denoted by bullets $\left(\bullet\right)$ in Figure 1. There
are also $a-1$ entries in the second column above the pivot cell,
whose entries are also smaller than $a+t$. All other entries in the
tableau are larger or equal to $a+t$. 

There are $\left(\begin{array}{c}
a+t-1\\
t
\end{array}\right)$ ways to choose which entries smaller than $a+t$ are in the first
column. In order to calculate the number of ways to place the entries
greater than the pivot cell we rotate the tableau $180^{o}$ degrees
and complement its entries by $i\leftrightarrow N+1-i$, where $N=m^{n}-t$
is the size of the rotated shape, whose entries are denoted by circles
$\left(\circ\right)$ in Figure 1. We get $SYT$ of shape $\left(m^{n-t},\left(m-1\right)^{t}\right)$.

In order to compute $f^{(m^{n-t},(m-1)^{t})}$, we will calculate
the ratio between this number and $f^{\left(m^{n}\right)}.$ 

Recall the Hook Length Formula from Proposition 2.2:

\[
f^{\lambda}=\frac{n!}{\prod_{c\in\left[\lambda\right]}h_{c}}.
\]

When we remove $t$ cells from the last column the following factors
are changed: the total number of cells, the hook lengths of the cells
in the last column and the hook lengths of the cells in the last $t$
rows. All other hook lengths do not change.

Thus, we obtain

\[
\frac{f^{\left(m^{n-t},\left(m-1\right)^{t}\right)}}{f^{\left(m^{n}\right)}}=
\]

\[
=\frac{\left(mn-t\right)!}{\left(n-t\right)!\prod_{i=1}^{m-1}\prod_{j=1}^{t}\left(i+j-1\right)}*\frac{n!\prod_{i=2}^{m}\prod_{j=1}^{t}\left(i+j-1\right)}{\left(mn\right)!}
\]

\[
=\frac{n!}{\left(n-t\right)!}\frac{\left(mn-t\right)!}{\left(mn\right)!}\frac{\prod_{j=1}^{t}\left(m+j-1\right)}{t!}
\]

\[
=\frac{\left(\begin{array}{c}
n\\
t
\end{array}\right)\left(\begin{array}{c}
m+t-1\\
t
\end{array}\right)}{\left(\begin{array}{c}
mn\\
t
\end{array}\right)}
\]

Taking the number of placements of letters smaller than $a+t$ into
account we deduce the following expression for the number of $SYT$
of battery shape $\left[\left(m^{n}\right),a,2\right]:$

\[
f^{\left(m^{n}\right)}\sum_{t=0}^{n}\frac{\left(\begin{array}{c}
t+m-1\\
t
\end{array}\right)\left(\begin{array}{c}
n\\
t
\end{array}\right)\left(\begin{array}{c}
t+a-1\\
t
\end{array}\right)}{\left(\begin{array}{c}
mn\\
t
\end{array}\right)}
\]

\[
=f^{\left(m^{n}\right)}\sum_{t=0}^{n}\frac{\left(\begin{array}{c}
t+m-1\\
t
\end{array}\right)\left(\begin{array}{c}
-n+t-1\\
t
\end{array}\right)\left(\begin{array}{c}
t+a-1\\
t
\end{array}\right)t!}{\left(\begin{array}{c}
-mn+t-1\\
t
\end{array}\right)\left(\begin{array}{c}
t\\
t
\end{array}\right)}\frac{1^{t}}{t!}.
\]

\[
=f^{\left(m^{n}\right)}\sum_{t=0}^{n}\frac{\left(m\right)_{t}\left(-n\right)_{t}\left(a\right)_{t}}{\left(-mn\right)_{t}\left(1\right)_{t}}\frac{1^{t}}{t!}
\]

\[
=f^{\left(m^{n}\right)}\mbox{ }_{3}F_{2}\left(a,m,-n;1,-mn;1\right).
\]
\end{proof}

\subsection{The computational algorithm }

The main result of the previous section expresses the number of $SYT$
of the required shape by a specific value of $_{3}F_{2}.$ In order
to get more explicit formulas we shall recursively use Theorem \ref{theorem 3.8},
increasing the value of the parameter $d$ until $d=a$. At this point
cancellation yields a $_{2}F_{1}$ function, for which Theorem 3.7
gives a closed formula, leading to a simplified expression\label{remark 13-1}.

For example,
\begin{enumerate}
\item One iteration:

\[
_{3}F_{2}\left(\begin{array}{ccc}
a, & b, & c\\
d, & e
\end{array};1\right)=
\]

\[
=\frac{c\left(e-a\right)}{de}\mbox{}_{3}F_{2}\left(\begin{array}{ccc}
a, & b+1 & c+1\\
d+1 & e+1
\end{array};1\right)+\frac{d-c}{d}\mbox{}_{3}F_{2}\left(\begin{array}{ccc}
a, & b+1 & c\\
d+1, & e
\end{array};1\right).
\]

\item Two iterations:

\[
_{3}F_{2}\left(\begin{array}{ccc}
a, & b, & c\\
d, & e
\end{array};1\right)=
\]

\[
\frac{c\left(e-a\right)\left(c+1\right)\left(e+1-a\right)}{\left(de\right)\left(d+1\right)\left(e+1\right)}\mbox{}_{3}F_{2}\left(\begin{array}{ccc}
a, & b+2 & c+2\\
d+2 & e+2
\end{array};1\right)
\]

\[
+\frac{2\left(d-c\right)c\left(e-a\right)}{\left(d+1\right)de}\mbox{}_{3}F_{2}\left(\begin{array}{ccc}
a, & b+2 & c+1\\
d+2, & e+1
\end{array};1\right)
\]

\[
+\frac{\left(d-c\right)\left(d+1-c\right)}{d\left(d+1\right)}\mbox{}_{3}F_{2}\left(\begin{array}{ccc}
a, & b+2 & c\\
d+2, & e
\end{array};1\right).
\]

\item Three iterations:
\end{enumerate}
\[
_{3}F_{2}\left(\begin{array}{ccc}
a, & b, & c\\
d, & e
\end{array};1\right)
\]

\[
=\frac{c\left(e-a\right)\left(c+1\right)\left(c+2\right)\left(e+1-a\right)\left(e+2-a\right)}{\left(de\right)\left(d+1\right)\left(e+1\right)\left(d+2\right)\left(e+2\right)}\mbox{}_{3}F_{2}\left(\begin{array}{ccc}
a, & b+3 & c+3\\
d+3 & e+3
\end{array};1\right)
\]

\[
+\frac{3c\left(e-a\right)\left(c+1\right)\left(e+1-a\right)\left(d-c\right)}{\left(de\right)\left(d+1\right)\left(e+1\right)\left(d+2\right)}\mbox{}_{3}F_{2}\left(\begin{array}{ccc}
a, & b+3 & c+2\\
d+3 & e+2
\end{array};1\right)
\]

\[
+\frac{3\left(d-c\right)c\left(e-a\right)\left(d-c+1\right)}{\left(d+1\right)de\left(d+2\right)}\mbox{}_{3}F_{2}\left(\begin{array}{ccc}
a, & b+3 & c+1\\
d+3 & e+1
\end{array};1\right)
\]

\[
+\frac{\left(d-c\right)\left(d-c+1\right)\left(d-c+2\right)}{d\left(d+1\right)\left(d+2\right)}\mbox{}_{3}F_{2}\left(\begin{array}{ccc}
a, & b+3 & c\\
d+3 & e
\end{array};1\right),
\]

and so on.

\subsection{Special case: fixed $a$}
\begin{rem}
$ $

\[
\frac{\left(\begin{array}{c}
c+b\\
b
\end{array}\right)}{\left(\begin{array}{c}
c+b-a\\
b
\end{array}\right)}=\frac{\left(\begin{array}{c}
c+b\\
a
\end{array}\right)}{\left(\begin{array}{c}
c\\
a
\end{array}\right)}
\]
\end{rem}

\textit{$Case$ }1. $\mbox{ }$ The number of $SYT$ of battery shape
$\left[\left(m^{n}\right),1,2\right]$ is

\[
f^{\left(m^{n}\right)}\frac{\left(\begin{array}{c}
mn+m\\
m
\end{array}\right)}{\left(\begin{array}{c}
mn+m-n\\
m
\end{array}\right)}.
\]

\begin{proof}
In order to prove this statement we apply Theorem 4.1:

\[
\mbox{}_{3}F_{2}\left(1,m,-n;1,-mn;1\right)={}_{2}F_{1}\left(m,-n;-mn;1\right)
\]

\[
=\frac{\left(\begin{array}{c}
mn+m\\
m
\end{array}\right)}{\left(\begin{array}{c}
mn+m-n\\
m
\end{array}\right)}.
\]

The last equality follows from Theorem 3.7 and Remark 4.2.
\end{proof}
\textit{$Case$ }$2$. $\mbox{ }$ The number of $SYT$ of battery
shape $\left[\left(m^{n}\right),2,2\right]$ is

\[
f^{\left(m^{n}\right)}\frac{\left(\begin{array}{c}
mn+m\\
m
\end{array}\right)}{\left(\begin{array}{c}
mn+m-n+1\\
m+1
\end{array}\right)}\frac{(2mn+m-n+1)}{\left(m+1\right)}.
\]

\begin{proof}
By Theorem 4.1 together with Theorems 3.9 and 3.7,

\[
\mbox{}_{3}F_{2}\left(2,m,-n;1,-mn;1\right)
\]

\[
=\frac{\left(-mn-2\right)}{m}\mbox{}_{3}F_{2}\left(\begin{array}{ccc}
2, & m+1 & -n+1\\
2 & -mn+1
\end{array};1\right)+\left(1+n\right)\mbox{}_{3}F_{2}\left(\begin{array}{ccc}
2, & m+1 & -n\\
2, & -mn
\end{array};1\right)
\]

\[
=\frac{\left(-mn-2\right)}{m}\mbox{}_{2}F_{1}\left(\begin{array}{cc}
m+1 & -n+1\\
-mn+1
\end{array};1\right)+\left(1+n\right)\mbox{}_{2}F_{1}\left(\begin{array}{cc}
m+1 & -n\\
-mn
\end{array};1\right)
\]

\[
=\frac{\left(-mn-2\right)}{m}\frac{\left(\begin{array}{c}
mn+m\\
m+1
\end{array}\right)}{\left(\begin{array}{c}
mn+m-n+1\\
m+1
\end{array}\right)}+\left(1+n\right)\frac{\left(\begin{array}{c}
mn+m+1\\
m+1
\end{array}\right)}{\left(\begin{array}{c}
mn+m-n+1\\
m+1
\end{array}\right)}
\]

\[
=\frac{\left(\begin{array}{c}
mn+m\\
m
\end{array}\right)}{\left(\begin{array}{c}
mn+m-n+1\\
m+1
\end{array}\right)}\left(\frac{n\left(-mn-2\right)}{m+1}+\frac{\left(n+1\right)\left(1+m+mn\right)}{m+1}\right)
\]

\[
=\frac{\left(\begin{array}{c}
mn+m\\
m
\end{array}\right)}{\left(\begin{array}{c}
mn+m-n+1\\
m+1
\end{array}\right)}\frac{(2mn+m-n+1)}{\left(m+1\right)}.
\]
\end{proof}
\textit{$Case$ }$3$. $\mbox{ }$ The number of $SYT$ of battery
shape $\left[\left(m^{n}\right),3,2\right]$ is

\[
f^{\left(m^{n}\right)}\frac{\left(\begin{array}{c}
mn+m\\
m
\end{array}\right)}{\left(\begin{array}{c}
mn-n+m+2\\
m+2
\end{array}\right)}\left(\frac{m^{2}(7n^{2}+7n+2)+m\left(-7n^{2}+9n+6\right)+2\left(n^{2}-3n+2\right)}{2(m+2)\left(m+1\right)}\right).
\]

\begin{proof}
Similar to the proof of case 2. Here two iterations of Theorem 3.9
are used.

\[
\mbox{}_{3}F_{2}\left(3,m,-n;1,-mn;1\right)
\]

\[
=\frac{-n\left(-mn-3\right)\left(-n+1\right)\left(-mn-2\right)}{2\left(-mn\right)\left(-mn+1\right)}\mbox{}_{3}F_{2}\left(\begin{array}{ccc}
3, & m+2 & -n+2\\
3 & -mn+2
\end{array};1\right)
\]

\[
+\frac{\left(n+1\right)-n\left(-mn-3\right)}{-mn}\mbox{}_{3}F_{2}\left(\begin{array}{ccc}
3, & m+2 & -n+1\\
3, & -mn+1
\end{array};1\right)
\]

\[
+\frac{\left(1+n\right)\left(2+n\right)}{2}\mbox{}_{3}F_{2}\left(\begin{array}{ccc}
3, & m+2 & -n\\
3, & -mn
\end{array};1\right)
\]

\[
=\frac{\left(-mn-3\right)\left(-n+1\right)\left(-mn-2\right)}{2m\left(-mn+1\right)}\mbox{}_{2}F_{1}\left(\begin{array}{cc}
m+2 & -n+2\\
-mn+2
\end{array};1\right)
\]

\[
+\frac{\left(n+1\right)\left(-mn-3\right)}{m}\mbox{}_{2}F_{1}\left(\begin{array}{cc}
m+2 & -n+1\\
-mn+1
\end{array};1\right)
\]

\[
+\frac{\left(1+n\right)\left(2+n\right)}{2}\mbox{}_{3}F_{2}\left(\begin{array}{cc}
m+2 & -n\\
-mn
\end{array};1\right)
\]

\[
=\frac{\left(-mn-3\right)\left(-n+1\right)\left(-mn-2\right)}{2m\left(-mn+1\right)}\frac{\left(\begin{array}{c}
mn+m\\
m+2
\end{array}\right)}{\left(\begin{array}{c}
mn+m-n+2\\
m+2
\end{array}\right)}
\]

\[
\frac{\left(n+1\right)\left(-mn-3\right)}{m}\frac{\left(\begin{array}{c}
mn+m+1\\
m+2
\end{array}\right)}{\left(\begin{array}{c}
mn+m-n+2\\
m+2
\end{array}\right)}
\]

\[
+\frac{\left(1+n\right)\left(2+n\right)}{2}\frac{\left(\begin{array}{c}
mn+m+2\\
m+2
\end{array}\right)}{\left(\begin{array}{c}
mn+m-n+2\\
m+2
\end{array}\right)}
\]

\[
=\frac{\left(\begin{array}{c}
mn+m\\
m
\end{array}\right)}{\left(\begin{array}{c}
mn-n+m+2\\
m+2
\end{array}\right)}\left(\frac{\left(n-1\right)n\left(mn+2\right)\left(mn+3\right)}{2\left(m+1\right)\left(m+2\right)}\right)
\]

\[
+\frac{\left(\begin{array}{c}
mn+m\\
m
\end{array}\right)}{\left(\begin{array}{c}
mn-n+m+2\\
m+2
\end{array}\right)}\left(\frac{\left(n+1\right)n\left(mn+m+1\right)\left(-mn-3\right)}{\left(m+1\right)\left(m+2\right)}\right)
\]

\[
+\frac{\left(\begin{array}{c}
mn+m\\
m
\end{array}\right)}{\left(\begin{array}{c}
mn-n+m+2\\
m+2
\end{array}\right)}\left(\frac{\left(n+1\right)\left(n+2\right)\left(mn+m+1\right)\left(mn+m+2\right)}{2\left(m+1\right)\left(m+2\right)}\right)
\]

\[
=\frac{\left(\begin{array}{c}
mn+m\\
m
\end{array}\right)}{\left(\begin{array}{c}
mn-n+m+2\\
m+2
\end{array}\right)}\left(\frac{m^{2}(7n^{2}+7n+2)+m\left(-7n^{2}+9n+6\right)+2\left(n^{2}-3n+2\right)}{2(m+2)\left(m+1\right)}\right).
\]
\end{proof}

\subsection{Special case: fixed $m$}

\textit{$Case$ }1. $\mbox{ }$ The number of $SYT$ of battery shape
$\left[\left(3^{n}\right),a,2\right]$ is

\[
f^{\left(3^{n}\right)}\frac{\left(\begin{array}{c}
3n+a\\
a
\end{array}\right)}{\left(\begin{array}{c}
2n+a+2\\
a+1
\end{array}\right)}\frac{\left(n+1\right)\left(an+2a+8n+4\right)}{2\left(2n+1\right)}.
\]

\begin{proof}
In order to prove this statement we use Theorem 4.1 and two iterations
of Theorem 3.9.

\[
\mbox{}_{3}F_{2}\left(3,a,-n;1,-3n;1\right)
\]

\[
=\frac{\left(-n-1\right)\left(-n+1\right)\left(-3n-2\right)}{2\left(-3n+1\right)}\mbox{}_{3}F_{2}\left(\begin{array}{ccc}
3, & a+2 & -n+2\\
3 & -3n+2
\end{array};1\right)
\]

\[
+\left(1+n\right)\left(-n-1\right)\mbox{}_{3}F_{2}\left(\begin{array}{ccc}
3, & a+2 & -n+1\\
3, & -3n+1
\end{array};1\right)
\]

\[
+\frac{\left(1+n\right)\left(2+n\right)}{2}\mbox{}_{3}F_{2}\left(\begin{array}{ccc}
3, & a+2 & -n\\
3, & -3n
\end{array};1\right)
\]

\[
=\frac{\left(-n-1\right)\left(-n+1\right)\left(-3n-2\right)}{2\left(-3n+1\right)}\mbox{}_{2}F_{1}\left(\begin{array}{cc}
a+2 & -n+2\\
-3n+2
\end{array};1\right)
\]

\[
+\left(1+n\right)\left(-n-1\right)\mbox{}_{2}F_{1}\left(\begin{array}{cc}
a+2 & -n+1\\
-3n+1
\end{array};1\right)
\]

\[
+\frac{\left(1+n\right)\left(2+n\right)}{2}\mbox{}_{2}F_{1}\left(\begin{array}{cc}
a+2 & -n\\
-3n
\end{array};1\right)
\]

\[
=\frac{\left(-n-1\right)\left(-n+1\right)\left(-3n-2\right)}{2\left(-3n+1\right)}\frac{\left(\begin{array}{c}
3n+a\\
a+2
\end{array}\right)}{\left(\begin{array}{c}
2n+a+2\\
a+2
\end{array}\right)}
\]

\[
+\left(1+n\right)\left(-n-1\right)\frac{\left(\begin{array}{c}
3n+a+1\\
a+2
\end{array}\right)}{\left(\begin{array}{c}
2n+a+2\\
a+2
\end{array}\right)}
\]

\[
+\frac{\left(1+n\right)\left(2+n\right)}{2}\frac{\left(\begin{array}{c}
3n+a+2\\
a+2
\end{array}\right)}{\left(\begin{array}{c}
2n+a+2\\
a+2
\end{array}\right)}
\]

\[
=\frac{\left(\begin{array}{c}
3n+a\\
a
\end{array}\right)}{\left(\begin{array}{c}
2n+a+2\\
a+1
\end{array}\right)}\left(\frac{3n\left(n-1\right)\left(n+1\right)\left(3n+2\right)}{2\left(a+1\right)\left(2n+1\right)}\right)
\]

\[
+\frac{\left(\begin{array}{c}
3n+a\\
a
\end{array}\right)}{\left(\begin{array}{c}
2n+a+2\\
a+1
\end{array}\right)}\left(\frac{3\left(-n-1\right)n\left(n+1\right)\left(a+3n+1\right)}{\left(a+1\right)\left(2n+1\right)}\right)
\]

\[
+\frac{\left(\begin{array}{c}
3n+a\\
a
\end{array}\right)}{\left(\begin{array}{c}
2n+a+2\\
a+1
\end{array}\right)}\left(\frac{\left(n+1\right)\left(n+2\right)\left(a+3n+1\right)\left(a+3n+2\right)}{2\left(a+1\right)\left(2n+1\right)}\right)
\]

\[
=\frac{\left(\begin{array}{c}
3n+a\\
a
\end{array}\right)}{\left(\begin{array}{c}
2n+a+2\\
a+1
\end{array}\right)}\frac{\left(n+1\right)\left(an+2a+8n+4\right)}{2\left(2n+1\right)}.
\]
\end{proof}

\paragraph{}

\textit{$Case$ }$2$. $\mbox{ }$ The number of $SYT$ of battery
shape $\left[\left(4^{n}\right),a,2\right]$ is

\[
f^{\left(4^{n}\right)}\frac{\left(\begin{array}{c}
4n+a\\
a
\end{array}\right)}{\left(\begin{array}{c}
3n+a+3\\
a+1
\end{array}\right)}\frac{\left(n+1\right)\left(a^{2}\left(n+2\right)\left(n+3\right)+a(n+2)\left(29n+15\right)+18\left(3n+1\right)\left(3n+2\right)\right)}{6\left(3n+1\right)\left(3n+2\right)}.
\]

\begin{proof}
In order to prove this statement we use three iterations of Theorem
3.9:

\[
\mbox{}_{3}F_{2}\left(4,a,-n;1,-4n;1\right)
\]

\[
=\frac{\left(-n-1\right)\left(-n+1\right)\left(-n+2\right)\left(-4n-3\right)\left(-4n-2\right)}{6\left(-4n+1\right)\left(-4n+2\right)}\mbox{}_{3}F_{2}\left(\begin{array}{ccc}
4, & a+3 & -n+3\\
4 & -4n+3
\end{array};1\right)+
\]

\[
+\frac{\left(-n-1\right)\left(-n+1\right)\left(-4n-3\right)\left(1+n\right)}{2\left(-4n+1\right)}\mbox{}_{3}F_{2}\left(\begin{array}{ccc}
4, & a+3 & -n+2\\
4 & -4n+2
\end{array};1\right)
\]

\[
+\frac{\left(1+n\right)\left(-n-1\right)\left(2+n\right)}{2}\mbox{}_{3}F_{2}\left(\begin{array}{ccc}
4, & a+3 & -n+1\\
4 & -4n+1
\end{array};1\right)
\]

\[
+\frac{\left(1+n\right)\left(2+n\right)\left(3+n\right)}{6}\mbox{}_{3}F_{2}\left(\begin{array}{ccc}
4, & a+3 & -n\\
4 & -4n
\end{array};1\right)
\]

\[
=\frac{\left(-n-1\right)\left(-n+1\right)\left(-n+2\right)\left(-4n-3\right)\left(-4n-2\right)}{6\left(-4n+1\right)\left(-4n+2\right)}\mbox{}_{2}F_{1}\left(\begin{array}{cc}
a+3 & -n+3\\
-4n+3
\end{array};1\right)
\]

\[
+\frac{\left(-n-1\right)\left(-n+1\right)\left(-4n-3\right)\left(1+n\right)}{2\left(-4n+1\right)}\mbox{}_{2}F_{1}\left(\begin{array}{cc}
a+3 & -n+2\\
-4n+2
\end{array};1\right)
\]

\[
+\frac{\left(1+n\right)\left(-n-1\right)\left(2+n\right)}{2}\mbox{}_{2}F_{1}\left(\begin{array}{cc}
a+3 & -n+1\\
-4n+1
\end{array};1\right)
\]

\[
+\frac{\left(1+n\right)\left(2+n\right)\left(3+n\right)}{6}\mbox{}_{2}F_{1}\left(\begin{array}{cc}
a+3 & -n\\
-4n
\end{array};1\right)
\]

\[
=\frac{\left(-n-1\right)\left(-n+1\right)\left(-n+2\right)\left(-4n-3\right)\left(-4n-2\right)}{6\left(-4n+1\right)\left(-4n+2\right)}\frac{\left(\begin{array}{c}
4n+a\\
a+3
\end{array}\right)}{\left(\begin{array}{c}
3n+a+3\\
a+3
\end{array}\right)}
\]

\[
+\frac{\left(-n-1\right)\left(-n+1\right)\left(-4n-3\right)\left(1+n\right)}{2\left(-4n+1\right)}\frac{\left(\begin{array}{c}
4n+a+1\\
a+3
\end{array}\right)}{\left(\begin{array}{c}
3n+a+3\\
a+3
\end{array}\right)}
\]

\[
+\frac{\left(1+n\right)\left(-n-1\right)\left(2+n\right)}{2}\frac{\left(\begin{array}{c}
4n+a+2\\
a+3
\end{array}\right)}{\left(\begin{array}{c}
3n+a+3\\
a+3
\end{array}\right)}
\]

\[
+\frac{\left(1+n\right)\left(2+n\right)\left(3+n\right)}{6}\frac{\left(\begin{array}{c}
4n+a+3\\
a+3
\end{array}\right)}{\left(\begin{array}{c}
3n+a+3\\
a+3
\end{array}\right)}
\]

\[
=\frac{\left(\begin{array}{c}
4n+a\\
a
\end{array}\right)}{\left(\begin{array}{c}
3n+a+3\\
a+1
\end{array}\right)}\left(\frac{-8n\left(n-1\right)\left(n-2\right)\left(n+1\right)\left(1+2n\right)\left(3+4n\right)}{6\left(a+1\right)\left(3n+1\right)\left(3n+2\right)}\right)
\]

\[
+\frac{\left(\begin{array}{c}
4n+a\\
a
\end{array}\right)}{\left(\begin{array}{c}
3n+a+3\\
a+1
\end{array}\right)}\left(\frac{12n\left(n+1\right)^{2}\left(1+4n+a\right)\left(4n+3\right)\left(n-1\right)}{6\left(a+1\right)\left(3n+1\right)\left(3n+2\right)}\right)
\]

\[
+\frac{\left(\begin{array}{c}
4n+a\\
a
\end{array}\right)}{\left(\begin{array}{c}
3n+a+3\\
a+1
\end{array}\right)}\left(\frac{-12n\left(n+1\right)^{2}\left(n+2\right)\left(1+4n+a\right)\left(a+4n+2\right)}{6\left(a+1\right)\left(3n+1\right)\left(3n+2\right)}\right)
\]

\[
+\frac{\left(\begin{array}{c}
4n+a\\
a
\end{array}\right)}{\left(\begin{array}{c}
3n+a+3\\
a+1
\end{array}\right)}\left(\frac{\left(1+n\right)\left(2+n\right)\left(3+n\right)\left(1+a+4n\right)\left(a+4n+2\right)\left(a+4n+3\right)}{6\left(a+1\right)\left(3n+1\right)\left(3n+2\right)}\right)
\]

\[
=\frac{\left(\begin{array}{c}
4n+a\\
a
\end{array}\right)}{\left(\begin{array}{c}
3n+a+3\\
a+1
\end{array}\right)}\frac{\left(n+1\right)\left(a^{2}\left(n+2\right)\left(n+3\right)+a(n+2)\left(29n+15\right)+18\left(3n+1\right)\left(3n+2\right)\right)}{6\left(3n+1\right)\left(3n+2\right)}.
\]
\end{proof}

\subsection{Special case: fixed $n$}

$Case\mbox{ }$$1.$ The number of $SYT$ of battery shape $\left[\left(m^{2}\right),a,2\right]$
is

\[
f^{\left(m^{2}\right)}\frac{\left(a+1\right)\left(a\left(m+1\right)+4\left(2m-1\right)\right)}{4\left(2m-1\right)}.
\]

$Case\mbox{ }$$2.$ The number of $SYT$ of battery shape $\left[\left(m^{3}\right),a,2\right]$
is

\[
f^{\left(m^{3}\right)}\frac{\left(a+1\right)\left(a^{2}\left(m+1\right)\left(m+2\right)+a\left(29m-14\right)\left(m+1\right)+18\left(3m-1\right)\left(3m-2\right)\right)}{18\left(3m-1\right)\left(3m-2\right)}.
\]

\newpage

\section{Additional shapes}

\subsection{Shape $\left[\left(m^{n}\right),a,3\right]$ }
\begin{defn}
We define a new function that will be useful in this section, a multiple
hypergeometric function: 

\[
_{\left[x_{0},x_{1},\ldots,x_{n}\right]}F_{\left[y_{0},y_{1}\ldots,y_{n}\right]}\left[\begin{array}{cccc}
a_{0,1},\ldots a_{0,x_{0}} & \brokenvert & b_{0,1},\ldots b_{0,y_{0}} & z_{0}\\
a_{1,1},\ldots a_{1,x_{1}} & \brokenvert & b_{1,1},\ldots b_{1,y_{1}}; & z_{1}\\
\vdots & \vdots & \vdots & \vdots\\
a_{n,1},\ldots a_{n,x_{n}} & \brokenvert & b_{n,1},\ldots b_{n,y_{n}} & z_{n}
\end{array}\right]=
\]

\[
=\sum_{m_{0}=0}^{\infty}\sum_{m_{1}=0}^{m_{0}}\cdots\sum_{m_{n=0}}^{m_{n-1}}\left(\prod_{i=0}^{n}\left(\frac{\prod_{j=1}^{x_{i}}\left(a_{i,j}\right)_{m_{i}}}{\prod_{j=1}^{y_{i}}\left(b_{i,j}\right)_{m_{i}}}\frac{z_{i}^{m_{i}}}{m_{i}!}\right)\right)
\]

Here $a_{i,j}$ and $b_{i,j}$ may depend on $m_{k}$ for $k<i.$
\end{defn}

\begin{rem}
When a suitable pair of parameters is non-positive, the summation
over $m_{0}$ is finite and the result is polynomial.
\begin{rem}
For $n=1$ we get the generalized hypergeometric function

\[
_{p}F_{q}\left[a_{1},a_{2}\ldots a_{p};b_{1},b_{2}\cdots b_{q};z\right]=\sum_{n=0}^{\infty}\frac{\prod_{j=1}^{p}\left(a_{i}\right)_{n}}{\prod_{j=1}^{q}\left(b_{i}\right)_{n}}\frac{z^{n}}{n!}
\]
\end{rem}

\end{rem}

\begin{example}
A battery shape $\left[\left(4^{3}\right),3,3\right]$ 

$\left[\begin{array}{cccc}
 &  & \diamond\\
 &  & \diamond\\
 &  & \star\\
\bullet & \bullet & \circ & \circ\\
\bullet & \circ & \circ & \circ\\
\circ & \circ & \circ & \circ
\end{array}\right]$.
\end{example}

\begin{thm}
The number of SYT of battery shape $\left[\left(m^{n}\right),a,3\right]$
is equal to 

$f^{\left(m^{n}\right)}\mbox{}_{\left[3,5\right]}F_{\left[2,4\right]}\left[\begin{array}{cccccccccc}
a & m & -n &  &  & \brokenvert & -mn & 1\\
a+t & m-1 & -n-1 & -t & -t & \brokenvert & -mn+t & -t-1 & -t-1 & 1
\end{array};\begin{array}{c}
1\\
1
\end{array}\right],$
\end{thm}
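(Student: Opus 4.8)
The plan is to follow the strategy of the proof of Theorem~4.1, but now with a \emph{nested} double summation: the outer sum runs over the first column of the rectangle and the inner one over the second column. Fix a standard Young tableau $T$ of $\left[\left(m^{n}\right),a,3\right]$ (so $m\ge 3$) and call the lowest cell of the adjoined column---the cell sitting directly above $\left(m^{n}\right)$ in its third column---the \emph{pivot}. Write the pivot entry as $a+t+s$, where $t$ is the number of cells of the first column of $\left(m^{n}\right)$ whose entry is smaller than the pivot entry and $s$ is the analogous number for the second column. Since an entry below the pivot in the second column forces a strictly smaller entry in the same row of the first column, these cells occupy the top $t$ (resp.\ top $s$) cells of the first (resp.\ second) column, whence $0\le s\le t\le n$; together with the $a-1$ cells of the adjoined column lying above the pivot they account for all $a+t+s-1$ entries below the pivot, consistently with the pivot value $a+t+s$.

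Next I would decompose $T$ exactly as in Theorem~4.1. The cells whose entry is below the pivot form the disjoint union of a vertical strip of length $a-1$ (above the pivot) and the Young diagram $\left(2^{s},1^{t-s}\right)$ occupying the top cells of the first two columns; as these two pieces share no row and no column, the number of ways to fill them with $\{1,\dots,a+t+s-1\}$ is $\binom{a+t+s-1}{a-1}f^{\left(2^{s},1^{t-s}\right)}$. The cells whose entry is above the pivot form $\left(m^{n}\right)$ with the top $t$ cells of the first column and the top $s$ cells of the second column deleted; rotating this region by $180^{\circ}$ and complementing its entries---the same device used in the proof of Theorem~4.1---turns it into a genuine SYT of the Young diagram $\left(m^{n-t},(m-1)^{t-s},(m-2)^{s}\right)$. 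Summing over all admissible $t,s$, the number of SYT of $\left[\left(m^{n}\right),a,3\right]$ equals
\[
\sum_{0\le s\le t\le n}\binom{a+t+s-1}{a-1}\,f^{\left(2^{s},1^{t-s}\right)}\,f^{\left(m^{n-t},(m-1)^{t-s},(m-2)^{s}\right)}.
\]

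It remains to evaluate the two tableau numbers. The Hook Length Formula gives $f^{\left(2^{s},1^{t-s}\right)}=\tfrac{t-s+1}{t+1}\binom{t+s}{s}$. For the other factor I would split the computation: first use the ratio already obtained in the proof of Theorem~4.1,
\[
\frac{f^{\left(m^{n-t},(m-1)^{t}\right)}}{f^{\left(m^{n}\right)}}=\frac{\binom{n}{t}\binom{m+t-1}{t}}{\binom{mn}{t}},
\]
and then, by a second application of the Hook Length Formula to the passage from $\left(m^{n-t},(m-1)^{t}\right)$ to $\left(m^{n-t},(m-1)^{t-s},(m-2)^{s}\right)$ (i.e.\ deleting the bottom $s$ cells of the $(m-1)$st column), obtain
\[
\frac{f^{\left(m^{n-t},(m-1)^{t-s},(m-2)^{s}\right)}}{f^{\left(m^{n-t},(m-1)^{t}\right)}}=\frac{\binom{m+s-2}{s}\binom{n+1}{s}\binom{t}{s}}{\binom{mn-t}{s}\binom{t+1}{s}}.
\]
Substituting these, using the identity $\binom{a+t+s-1}{a-1}\binom{t+s}{s}=\binom{a+t+s-1}{s}\binom{a+t-1}{t}$, and rewriting every binomial coefficient as a Pochhammer symbol, the factor $f^{\left(m^{n}\right)}$ comes out in front, the outer sum over $t$ becomes $\dfrac{(a)_{t}(m)_{t}(-n)_{t}}{(-mn)_{t}(1)_{t}}\dfrac{1}{t!}$ and the inner sum over $s$ becomes $\dfrac{(a+t)_{s}(m-1)_{s}(-n-1)_{s}(-t)_{s}(-t)_{s}}{(-mn+t)_{s}(-t-1)_{s}(-t-1)_{s}(1)_{s}}\dfrac{1}{s!}$, which is precisely $f^{\left(m^{n}\right)}$ times the multiple hypergeometric function $\mbox{}_{[3,5]}F_{[2,4]}$ of the statement, with $m_{0}=t$, $m_{1}=s$ and $z_{0}=z_{1}=1$.

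The step I expect to be the main obstacle is the second Hook Length computation: one has to pin down exactly which hook lengths change when the bottom $s$ cells are removed from the $(m-1)$st column of the \emph{non-rectangular} shape $\left(m^{n-t},(m-1)^{t}\right)$---namely those in that column and those in the bottom $s$ rows---and then evaluate the three telescoping products that arise (the case $s=1$ already exhibits the pattern). A minor nuisance is keeping track of signs and factorials when the binomials are converted to Pochhammer symbols, and noting that the inequality $s\le t$ is simultaneously forced combinatorially (by the column-increasing condition) and encoded by the factor $(-t)_{s}$, while no denominator Pochhammer symbol vanishes in the range $0\le s\le t\le n$, $m\ge 3$. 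The remaining manipulations are routine and parallel the proof of Theorem~4.1.
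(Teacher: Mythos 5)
Your proposal is correct and takes essentially the same route as the paper: the paper likewise sums over the lengths $t$ and $v$ (your $t$ and $s$) of the two column segments of the rectangle lying below the pivot, counts the fillings of the below-pivot region by $\binom{a+t+v-1}{t+v}\cdot\frac{(t+v)!\,(t-v+1)}{v!\,(t+1)!}$ (your $\binom{a+t+s-1}{a-1}f^{(2^{s},1^{t-s})}$), evaluates the rotated complementary shape by the Hook Length Formula, and converts the resulting double sum into the stated ${}_{[3,5]}F_{[2,4]}$. Your only deviation is cosmetic --- you obtain the circle count as a product of two hook-length ratios through the intermediate shape $\left(m^{n-t},(m-1)^{t}\right)$ rather than in a single computation --- and all of your intermediate formulas agree with the paper's.
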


\paragraph{$Proof.$}
\begin{proof}
We use the same technique as in the proof of Theorem \ref{subsec:Enumerate-battery-tableau}.

Denote by a star $\left(\star\right)$ the pivot cell, namely the
cell immediately above the top cell in the third column of $\left[\left(m^{n}\right)\right]$.
Denote by bullets $\left(\bullet\right)$ the cells of $\left[\left(m^{n}\right)\right]$
whose entries are smaller than the entry of the pivot cell, and by
$\left(\circ\right)$ the cells of $\left[\left(m^{n}\right)\right]$
whose entries are larger than the entry of the pivot cell. The bullets
form a regular shape,with at most two columns. The circles form a
$180^{\circ}$ rotation of a regular shape. Let $t$ and $v$ be the
numbers of cells in the first and second column, respectively, of
the bullet tableau. By the Hook Length Formula we get:

The number of possible bullet tableaux is equal to $\frac{(t+v)!\left(t-v+1\right)}{v!(t+1)!}.$

number of possible circle tableaux is 

\[
f^{\left(m^{n}\right)}\frac{\left(\begin{array}{c}
t+m-1\\
t
\end{array}\right)\left(\begin{array}{c}
v+m-2\\
v
\end{array}\right)\left(\begin{array}{c}
n\\
t
\end{array}\right)t!\left(\begin{array}{c}
n+1\\
v
\end{array}\right)\left(\begin{array}{c}
t\\
v
\end{array}\right)v!}{\left(\begin{array}{c}
mn\\
t+v
\end{array}\right)\left(t+v\right)!\left(\begin{array}{c}
t+1\\
v
\end{array}\right)}.
\]

The product of these two numbers is

\[
f^{\left(m^{n}\right)}\frac{\left(\begin{array}{c}
t+m-1\\
t
\end{array}\right)\left(\begin{array}{c}
v+m-2\\
v
\end{array}\right)\left(\begin{array}{c}
n\\
t
\end{array}\right)\left(\begin{array}{c}
n+1\\
v
\end{array}\right)\left(\begin{array}{c}
t\\
v
\end{array}\right)\left(t-v+1\right)}{\left(\begin{array}{c}
mn\\
t+v
\end{array}\right)\left(\begin{array}{c}
t+1\\
v
\end{array}\right)\left(t+1\right)}.
\]

Multiplying by the number of ways to choose the set of entries of
the bullet cells and summing over all possible values of $t$ and
$v$ we get

\[
f^{\left(m^{n}\right)}\sum_{t=0}^{n}\sum_{v=0}^{t}\frac{\left(\begin{array}{c}
a+t+v-1\\
t+v
\end{array}\right)\left(\begin{array}{c}
t+m-1\\
t
\end{array}\right)\left(\begin{array}{c}
v+m-2\\
v
\end{array}\right)\left(\begin{array}{c}
n\\
t
\end{array}\right)\left(\begin{array}{c}
n+1\\
v
\end{array}\right)\left(\begin{array}{c}
t\\
v
\end{array}\right)\left(t-v+1\right)}{\left(\begin{array}{c}
mn\\
t+v
\end{array}\right)\left(\begin{array}{c}
t+1\\
v
\end{array}\right)\left(t+1\right)}
\]

$=f^{\left(m^{n}\right)}\sum_{t=0}^{n}\sum_{v=0}^{t}\frac{\left(\begin{array}{c}
-a\\
t+v
\end{array}\right)\left(\begin{array}{c}
-m\\
t
\end{array}\right)\left(\begin{array}{c}
-m+1\\
v
\end{array}\right)\left(\begin{array}{c}
n\\
t
\end{array}\right)\left(\begin{array}{c}
n+1\\
v
\end{array}\right)\left(\begin{array}{c}
t\\
v
\end{array}\right)^{2}}{\left(\begin{array}{c}
mn\\
t+v
\end{array}\right)\left(\begin{array}{c}
t+1\\
v
\end{array}\right)^{2}}$

$=f^{\left(m^{n}\right)}\sum_{t=0}^{n}\sum_{v=0}^{t}\frac{\left(\begin{array}{c}
-a\\
t
\end{array}\right)\left(\begin{array}{c}
-a-t\\
v
\end{array}\right)\left(\begin{array}{c}
-m\\
t
\end{array}\right)\left(\begin{array}{c}
-m+1\\
v
\end{array}\right)\left(\begin{array}{c}
n\\
t
\end{array}\right)\left(\begin{array}{c}
n+1\\
v
\end{array}\right)\left(\begin{array}{c}
t\\
v
\end{array}\right)^{2}}{\left(\begin{array}{c}
mn\\
t
\end{array}\right)\left(\begin{array}{c}
mn-t\\
v
\end{array}\right)\left(\begin{array}{c}
t+1\\
v
\end{array}\right)^{2}}$

$=f^{\left(m^{n}\right)}\sum_{t=0}^{n}\frac{\left(\begin{array}{c}
-a\\
t
\end{array}\right)\left(\begin{array}{c}
-m\\
t
\end{array}\right)\left(\begin{array}{c}
n\\
t
\end{array}\right)}{\left(\begin{array}{c}
mn\\
t
\end{array}\right)}\mbox{ \ensuremath{_{5}F_{4}\left(\begin{array}{ccccc}
a+t & m-1 & -n-1 & -t & -t\\
-mn+t & -t-1 & -t-1 & 1
\end{array};1\right)}}$

$=f^{\left(m^{n}\right)}\mbox{}_{\left[3,5\right]}F_{\left[2,4\right]}\left[\begin{array}{cccccccccc}
a & m & -n &  &  & \brokenvert & -mn & 1\\
a+t & m-1 & -n-1 & -t & -t & \brokenvert & -mn+t & -t-1 & -t-1 & 1
\end{array};\begin{array}{c}
1\\
1
\end{array}\right].$
\end{proof}
$ $

As an example, we give explicit formulas for small values of $n.$

$Case$ $1.$ The number of $SYT$ of battery shape $\left[\left(m^{2}\right),a,3\right]$
is

\[
f^{\left(m^{2}\right)}\frac{\left(a+1\right)\left(a+2\right)\left(a^{2}m\left(m+1\right)+a\left(m+1\right)\left(19m-24\right)+24\left(2m-1\right)\left(2m-3\right)\right)}{48\left(2m-1\right)\left(2m-3\right)}.
\]

$Case$ $2.$ The number of $SYT$ of battery shape $\left[\left(m^{3}\right),a,3\right]$
is

\[
f^{\left(m^{3}\right)}\frac{\left(a+1\right)\left(a+2\right)w}{1296\left(3m-1\right)\left(3m-2\right)\left(3m-4\right)\left(3m-5\right)}
\]

where 

\[
w=a^{4}m\left(m+1\right)^{2}\left(m+2\right)+6a^{3}m\left(11m-13\right)\left(m+1\right)\left(m+2\right)
\]

\[
+a^{2}\left(m+1\right)^{2}\left(1559m^{2}-3722m+2160\right)
\]

\[
+6a\left(m+1\right)\left(2521m^{3}-8169m^{2}+8078m-2280\right)
\]

\[
+648\left(3m-1\right)\left(3m-2\right)\left(3m-4\right)\left(3m-5\right).
\]

\subsection{Shape $\left[\left(m^{n}\right),a,4\right]$ }
\begin{thm}
The number of SYT of battery shape $\left[\left(m^{n}\right),a,4\right]$
is equal to 

$f^{\left(m^{n}\right)}\mbox{}_{\left[3,5,7\right]}F_{\left[2,4,6\right]}$

$\begin{array}{c}
\lceil\\
\brokenvert\\
\lfloor
\end{array}\begin{array}{ccccccc}
a & m & -n\\
a+t & m-1 & -n-1 & -t & -t\\
a+t+v & m-2 & -n-2 & -t-1 & -t-1 & -v & -v
\end{array}\begin{array}{c}
\brokenvert\\
\brokenvert\\
\brokenvert
\end{array}$

$\begin{array}{cccccc}
-mn & 1\\
-mn+t & -t-1 & -t-1 & 1\\
-mn+t+v & -t-2 & -t-2 & -v-1 & -v-1 & 1
\end{array};\begin{array}{c}
1\\
1\\
1
\end{array}\begin{array}{c}
\rceil\\
\brokenvert\\
\rfloor
\end{array}.$
\end{thm}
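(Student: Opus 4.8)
The plan is to run the argument of Theorem~\ref{subsec:Enumerate-battery-tableau} and of the $\left[\left(m^{n}\right),a,3\right]$ theorem one level deeper. Fix the battery shape $\left[\left(m^{n}\right),a,4\right]$ and call the \emph{pivot} the bottom cell of the appended column, i.e.\ the cell immediately above the top cell of the fourth column of $\left[\left(m^{n}\right)\right]$. Classify the cells of $\left[\left(m^{n}\right)\right]$ as \emph{bullets} or \emph{circles} according to whether their entry is smaller or larger than the pivot's. Since the pivot sits above column $4$, row- and column-monotonicity of a standard tableau forces every cell of $\left[\left(m^{n}\right)\right]$ in columns $\ge 4$ to carry an entry larger than the pivot's; hence all bullets lie in the first three columns, and, forming a down-set for the entry order, they constitute a regular shape with at most three columns. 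Writing $t\ge v\ge w\ge 0$ for the lengths of its first, second and third columns, the bullet shape is $\left(3^{w},2^{v-w},1^{t-v}\right)$; the circles fill the complement of this shape inside the $n\times m$ rectangle, and a $180^{\circ}$ rotation together with the complementation $i\leftrightarrow N+1-i$ of the entries ($N=mn-t-v-w$) turns the circle region into an honest Young diagram of shape $\left(m^{n-t},(m-1)^{t-v},(m-2)^{v-w},(m-3)^{w}\right)$.

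For a fixed pivot entry, which equals $a+t+v+w$ (the $t+v+w$ bullets together with the $a-1$ cells of the appended column above the pivot are exactly the smaller entries), I would evaluate three quantities by the Hook Length Formula: (i) the number of standard bullet tableaux, $f^{\left(3^{w},2^{v-w},1^{t-v}\right)}$, obtained by bookkeeping the hooks of the at-most-three-column shape just as $\frac{(t+v)!\,(t-v+1)}{v!\,(t+1)!}$ was obtained for the two-column shape in the $\left[\left(m^{n}\right),a,3\right]$ proof; (ii) the ratio $f^{\left(m^{n-t},(m-1)^{t-v},(m-2)^{v-w},(m-3)^{w}\right)}\big/f^{\left(m^{n}\right)}$, computed exactly as in Theorem~\ref{subsec:Enumerate-battery-tableau} by noting that only the cells of the last three columns and of the affected rows change their hook lengths, the outcome being a ratio of products of binomial coefficients in $t$, $v$, $w$; and (iii) the number $\binom{a+t+v+w-1}{t+v+w}$ of choices of the set of bullet entries out of $\{1,\dots,a+t+v+w-1\}$, the remaining $a-1$ small entries being then forced, increasingly, into the appended column.

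Multiplying (i)--(iii) and summing over $0\le w\le v\le t\le n$ expresses the number of $SYT$ of $\left[\left(m^{n}\right),a,4\right]$ as $f^{\left(m^{n}\right)}$ times a triple sum of products of binomial coefficients. I would then rewrite every binomial coefficient as a Pochhammer symbol via $\binom{-z+r-1}{r}=(-1)^{r}\binom{z}{r}$, and distribute the binomials carrying the combined index $t+v+w$ over the three summation variables by observing that ratios such as $\binom{-a}{t+v+w}\big/\binom{mn}{t+v+w}$ factor as a product over $t$, $v$, $w$ once the common multinomial correction factors cancel between numerator and denominator --- precisely the step performed one level down. Each surviving factor then depends on a single one of $t,v,w$ together with the outer indices, so peeling off the innermost $w$-sum (with $t,v$ held fixed) reproduces, term by term, a $_{7}F_{6}$ evaluated at $1$ with the parameters shown on the third row of the statement; the $v$-sum then contributes the level-$1$ parameters and the outermost $t$-sum the level-$0$ parameters, which is exactly the definition of $_{[3,5,7]}F_{[2,4,6]}$ with the listed arguments (the entry $-n$ at level $0$ being what truncates the $t$-sum at $n$, as $-mn$ did in the earlier cases).

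The main obstacle is purely computational: getting the hook-length product for the three-column bullet shape $\left(3^{w},2^{v-w},1^{t-v}\right)$ correct, and then matching the resulting thicket of Pochhammer symbols --- after the threefold splitting and the sign flips --- against the precise list of upper and lower parameters at each of the three levels. No new idea beyond Theorem~\ref{subsec:Enumerate-battery-tableau} and the $\left[\left(m^{n}\right),a,3\right]$ case is needed; the risk lies entirely in the algebraic assembly, in particular in checking that the repeated parameters (the pairs $-t,-t$ and $-t-1,-t-1$ at level $1$, and the analogous $-t-1,-t-1$ and $-v,-v$ at level $2$) emerge from the squared binomials $\binom{t}{v}^{2}$, $\binom{v}{w}^{2}$ and the $\binom{t+1}{v}^{2}$, $\binom{v+1}{w}^{2}$ in the denominators, exactly as the $\binom{t}{v}^{2}/\binom{t+1}{v}^{2}$ pair produced the level-$1$ doubled parameters in the $\left[\left(m^{n}\right),a,3\right]$ proof.
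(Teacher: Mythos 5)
Your proposal follows essentially the same route as the paper's own proof: the pivot cell above column four, the bullet/circle decomposition into a three-column shape $(3^{w},2^{v-w},1^{t-v})$ and its rotated complement $(m^{n-t},(m-1)^{t-v},(m-2)^{v-w},(m-3)^{w})$, the factor $\binom{a+t+v+w-1}{t+v+w}$ for placing the small entries, two applications of the Hook Length Formula, and the peeling of the triple sum into nested hypergeometric sums matching the definition of $_{[3,5,7]}F_{[2,4,6]}$. The paper carries out exactly this computation (its bullet-tableau count $\frac{(t+v+w)!(v-w+1)(t-v+1)(t-w+2)}{w!(v+1)!(t+2)!}$ is the hook-length evaluation you defer), so your outline is correct and identical in approach.
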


\begin{proof}
We use the Hook Length Formula.
\begin{enumerate}
\item The number of bullet tableaux is equal to $\frac{(t+v+w)!\left(v-w+1\right)\left(t-v+1\right)\left(t-w+2\right)}{w!\left(v+1\right)!\left(t+2\right)!}.$
\item The number of circle tableaux is equal to 

\[
f^{\left(m^{n}\right)}\frac{\left(\begin{array}{c}
t+m-1\\
t
\end{array}\right)\left(\begin{array}{c}
v+m-2\\
v
\end{array}\right)v!t!w!}{\left(\begin{array}{c}
mn\\
t+v+w
\end{array}\right)(t+v+w)!\left(\begin{array}{c}
t+1\\
v
\end{array}\right)\left(\begin{array}{c}
t+2\\
w
\end{array}\right)\left(\begin{array}{c}
v+1\\
w
\end{array}\right)}
\]

$\left(\left(\begin{array}{c}
w+m-3\\
w
\end{array}\right)\left(\begin{array}{c}
n\\
t
\end{array}\right)\left(\begin{array}{c}
n+1\\
v
\end{array}\right)\left(\begin{array}{c}
t\\
v
\end{array}\right)\left(\begin{array}{c}
v\\
w
\end{array}\right)\left(\begin{array}{c}
n+2\\
w
\end{array}\right)\left(\begin{array}{c}
t+1\\
w
\end{array}\right)\right).$

$ $

The total number is therefore

\[
f^{\left(m^{n}\right)}\sum_{t=0}^{n}\sum_{v=0}^{t}\sum_{w=0}^{v}\frac{\left(v-w+1\right)\left(t-v+1\right)\left(t-w+2\right)\left(\begin{array}{c}
t+m-1\\
t
\end{array}\right)\left(\begin{array}{c}
v+m-2\\
v
\end{array}\right)}{\left(\begin{array}{c}
mn\\
t+v+w
\end{array}\right)\left(\begin{array}{c}
t+1\\
v
\end{array}\right)\left(v+1\right)\left(t+1\right)\left(t+2\right)\left(\begin{array}{c}
t+2\\
w
\end{array}\right)\left(\begin{array}{c}
v+1\\
w
\end{array}\right)}
\]

$\left(\left(\begin{array}{c}
w+m-3\\
w
\end{array}\right)\left(\begin{array}{c}
n\\
t
\end{array}\right)\left(\begin{array}{c}
n+1\\
v
\end{array}\right)\left(\begin{array}{c}
t\\
v
\end{array}\right)\left(\begin{array}{c}
v\\
w
\end{array}\right)\left(\begin{array}{c}
a+t+v+w-1\\
t+v+w
\end{array}\right)\left(\begin{array}{c}
n+2\\
w
\end{array}\right)\left(\begin{array}{c}
t+1\\
w
\end{array}\right)\right)$

\[
=f^{\left(m^{n}\right)}\sum_{t=0}^{n}\sum_{v=0}^{t}\sum_{w=0}^{v}\frac{\left(\begin{array}{c}
-m\\
t
\end{array}\right)\left(\begin{array}{c}
-m+1\\
v
\end{array}\right)\left(\begin{array}{c}
-a-t-v\\
w
\end{array}\right)\left(\begin{array}{c}
-a-t\\
v
\end{array}\right)\left(\begin{array}{c}
-a\\
t
\end{array}\right)}{\left(\begin{array}{c}
mn-t-v\\
w
\end{array}\right)\left(\begin{array}{c}
mn-t\\
v
\end{array}\right)\left(\begin{array}{c}
mn\\
t
\end{array}\right)\left(\begin{array}{c}
t+1\\
v
\end{array}\right)^{2}\left(\begin{array}{c}
t+2\\
w
\end{array}\right)^{2}\left(\begin{array}{c}
v+1\\
w
\end{array}\right)^{2}}
\]

$\left(\left(\begin{array}{c}
-m+2\\
w
\end{array}\right)\left(\begin{array}{c}
n\\
t
\end{array}\right)\left(\begin{array}{c}
n+1\\
v
\end{array}\right)\left(\begin{array}{c}
t\\
v
\end{array}\right)^{2}\left(\begin{array}{c}
v\\
w
\end{array}\right)^{2}\left(\begin{array}{c}
n+2\\
w
\end{array}\right)\left(\begin{array}{c}
t+1\\
w
\end{array}\right)^{2}\right)$

\[
=f^{\left(m^{n}\right)}\sum_{t=0}^{n}\sum_{v=0}^{t}\frac{\left(\begin{array}{c}
-m\\
t
\end{array}\right)\left(\begin{array}{c}
-m+1\\
v
\end{array}\right)\left(\begin{array}{c}
-a-t\\
v
\end{array}\right)\left(\begin{array}{c}
-a\\
t
\end{array}\right)\left(\begin{array}{c}
n\\
t
\end{array}\right)\left(\begin{array}{c}
n+1\\
v
\end{array}\right)\left(\begin{array}{c}
t\\
v
\end{array}\right)^{2}}{\left(\begin{array}{c}
mn-t\\
v
\end{array}\right)\left(\begin{array}{c}
mn\\
t
\end{array}\right)\left(\begin{array}{c}
t+1\\
v
\end{array}\right)^{2}}
\]

\[
\sum_{w=0}^{v}\frac{\left(\begin{array}{c}
-a-t-v\\
w
\end{array}\right)\left(\begin{array}{c}
-m+2\\
w
\end{array}\right)\left(\begin{array}{c}
v\\
w
\end{array}\right)^{2}\left(\begin{array}{c}
n+2\\
w
\end{array}\right)\left(\begin{array}{c}
t+1\\
w
\end{array}\right)^{2}w!}{\left(\begin{array}{c}
mn-t-v\\
w
\end{array}\right)\left(\begin{array}{c}
t+2\\
w
\end{array}\right)^{2}\left(\begin{array}{c}
v+1\\
w
\end{array}\right)^{2}\left(\begin{array}{c}
w\\
w
\end{array}\right)}\frac{1}{w!}
\]

$=f^{\left(m^{n}\right)}\sum_{t=0}^{n}\sum_{v=0}^{t}\frac{\left(\begin{array}{c}
-m\\
t
\end{array}\right)\left(\begin{array}{c}
-m+1\\
v
\end{array}\right)\left(\begin{array}{c}
-a-t\\
v
\end{array}\right)\left(\begin{array}{c}
-a\\
t
\end{array}\right)\left(\begin{array}{c}
n\\
t
\end{array}\right)\left(\begin{array}{c}
n+1\\
v
\end{array}\right)\left(\begin{array}{c}
t\\
v
\end{array}\right)^{2}}{\left(\begin{array}{c}
mn-t\\
v
\end{array}\right)\left(\begin{array}{c}
mn\\
t
\end{array}\right)\left(\begin{array}{c}
t+1\\
v
\end{array}\right)^{2}}$

$_{7}F_{6}\left(\begin{array}{ccccccc}
a+t+v & m-2 & -v & -v & -n-2 & -t-1 & -t-1\\
-mn+t+v & -t-2 & -t-2 & -v-1 & -v-1 & 1
\end{array};1\right)$

$=f^{\left(m^{n}\right)}\mbox{}_{\left[3,5,7\right]}F_{\left[2,4,6\right]}$

$\begin{array}{c}
\lceil\\
\brokenvert\\
\lfloor
\end{array}\begin{array}{ccccccc}
a & m & -n\\
a+t & m-1 & -n-1 & -t & -t\\
a+t+v & m-2 & -n-2 & -t-1 & -t-1 & -v & -v
\end{array}\begin{array}{c}
\brokenvert\\
\brokenvert\\
\brokenvert
\end{array}$

$\begin{array}{cccccc}
-mn & 1\\
-mn+t & -t-1 & -t-1 & 1\\
-mn+t+v & -t-2 & -t-2 & -v-1 & -v-1 & 1
\end{array};\begin{array}{c}
1\\
1\\
1
\end{array}\begin{array}{c}
\rceil\\
\brokenvert\\
\rfloor
\end{array}.$

\end{enumerate}
\end{proof}
\begin{example}
The number of tableaux of battery shape $\left[\left(7^{11}\right),1,4\right]$

as in the figure 

$\left(\begin{array}{ccccccc}
 &  &  & \bullet\\
\bullet & \bullet & \bullet & \bullet & \bullet & \bullet & \bullet\\
\bullet & \bullet & \bullet & \bullet & \bullet & \bullet & \bullet\\
\bullet & \bullet & \bullet & \bullet & \bullet & \bullet & \bullet\\
\bullet & \bullet & \bullet & \bullet & \bullet & \bullet & \bullet\\
\bullet & \bullet & \bullet & \bullet & \bullet & \bullet & \bullet\\
\bullet & \bullet & \bullet & \bullet & \bullet & \bullet & \bullet\\
\bullet & \bullet & \bullet & \bullet & \bullet & \bullet & \bullet\\
\bullet & \bullet & \bullet & \bullet & \bullet & \bullet & \bullet\\
\bullet & \bullet & \bullet & \bullet & \bullet & \bullet & \bullet\\
\bullet & \bullet & \bullet & \bullet & \bullet & \bullet & \bullet\\
\bullet & \bullet & \bullet & \bullet & \bullet & \bullet & \bullet
\end{array}\right)$

is equal to:

$2^{7}\times3^{2}\times5^{2}\times7\times13\times17^{3}\times19^{3}\times23^{2}\times29^{2}\times31^{2}$

$\times37^{2}\times41\times43\times59\times61\times67\times71\times73\times2792843.$
\begin{example}
The number of $SYT$ of battery shape $\left[\left(m^{2}\right),a,4\right]$
is

\[
f^{\left(m^{2}\right)}\frac{\left(a+1\right)\left(a+2\right)\left(a+3\right)w}{1152\left(2m-1\right)\left(2m-3\right)\left(2m-5\right)}
\]

where

\[
w=a^{3}m\left(m-1\right)\left(m+1\right)+3a^{2}m\left(m+1\right)\left(11m-23\right)
\]

\[
+4a\left(m+1\right)\left(95m^{2}-338m+270\right)+192\left(2m-1\right)\left(2m-3\right)\left(2m-5\right).
\]
\end{example}

\end{example}

\subsection{Shape $\left[\left(m^{n}\right),a,5\right]$ }
\begin{thm}
The number of SYT of battery shape $\left[\left(m^{n}\right),a,5\right]$
is equal to 

$=f^{\left(m^{n}\right)}\mbox{}_{\left[3,5,7,9\right]}F_{\left[2,4,6,8\right]}$

$\begin{array}{c}
\lceil\\
\brokenvert\\
\brokenvert\\
\lfloor
\end{array}\begin{array}{ccccccccc}
a & m & -n\\
a+t & m-1 & -n-1 & -t & -t\\
a+t+v & m-2 & -n-2 & -t-1 & -t-1 & -v & -v\\
a+t+v+w & m-3 & -n-3 & -t-2 & -t-2 & -v-1 & -v-1 & -w & -w
\end{array}\begin{array}{c}
\brokenvert\\
\brokenvert\\
\brokenvert\\
\brokenvert
\end{array}$

$\begin{array}{cccccccc}
-mn & 1\\
-mn+t & -t-1 & -t-1 & 1\\
-mn+t+v & -t-2 & -t-2 & -v-1 & -v-1 & 1\\
-mn+t+v+w & -t-3 & -t-3 & -v-2 & -v-2 & -w-1 & -w-1 & 1
\end{array};\begin{array}{c}
1\\
1\\
1\\
1
\end{array}\begin{array}{c}
\rceil\\
\brokenvert\\
\brokenvert\\
\rfloor
\end{array}$
\end{thm}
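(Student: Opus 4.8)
The plan is to repeat, with $k=5$, the uniform argument used for the shapes $[(m^{n}),a,2]$, $[(m^{n}),a,3]$ and $[(m^{n}),a,4]$. First I would fix the pivot cell to be the cell immediately above the top cell of the fifth column of $[(m^{n})]$, and partition $[(m^{n})]$ into the bullet cells (those receiving an entry smaller than the pivot entry) and the circle cells (those receiving a larger one). The cells with entry below a threshold always form an order ideal, and the top cell of the fifth column lies directly below the pivot, so row and column monotonicity force every cell of columns $5,\dots,m$ to exceed the pivot entry; hence the bullets form a straight Young diagram contained in columns $1,\dots,4$, with column lengths $t\ge v\ge w\ge x\ge 0$, while the $a-1$ cells of the added column above the pivot are automatically filled by entries below the pivot, so the pivot entry equals $a+t+v+w+x$. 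After a $180^{\circ}$ rotation and complementation of entries, the complement of the bullets in $[(m^{n})]$ becomes the straight shape $(m^{n-t},(m-1)^{t-v},(m-2)^{v-w},(m-3)^{w-x},(m-4)^{x})$.

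Next I would evaluate the relevant counts with the Hook Length Formula. The number of bullet tableaux is $f^{(t,v,w,x)}$, which the Hook Length Formula gives as $\frac{(t+v+w+x)!\,(t-v+1)(t-w+2)(t-x+3)(v-w+1)(v-x+2)(w-x+1)}{(t+3)!\,(v+2)!\,(w+1)!\,x!}$, extending the factors $\frac{(t+v)!(t-v+1)}{v!(t+1)!}$ and $\frac{(t+v+w)!(v-w+1)(t-v+1)(t-w+2)}{w!(v+1)!(t+2)!}$ that occurred for $k=3$ and $k=4$. The number of circle tableaux is $f^{(m^{n})}$ times a ratio of products of binomial coefficients in $t,v,w,x$, obtained --- exactly as in the $k=2$ case --- by comparing the hook lengths of $(m^{n-t},(m-1)^{t-v},(m-2)^{v-w},(m-3)^{w-x},(m-4)^{x})$ with those of the $m\times n$ rectangle: passing from the rectangle to this shape changes only the number of cells, the hook lengths in the last four columns, and the hook lengths in the last $t$ rows, which introduces upper parameters shifted from $m$ by $-1,-2,-3$ and from $n$ by $+1,+2,+3$, a factor $\binom{mn}{t+v+w+x}$, and six further factors of $\binom{t+1}{v}$-type, one per hook difference of the bullet shape.

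Then I would multiply these two counts by $\binom{a+t+v+w+x-1}{t+v+w+x}$, the number of ways to distribute the $a+t+v+w+x-1$ smallest entries between the bullet cells and the added column above the pivot, and sum over $0\le x\le w\le v\le t\le n$. Rewriting each binomial coefficient as a Pochhammer symbol via $\binom{-z+\ell-1}{\ell}=(-1)^{\ell}\binom{z}{\ell}$ and repeatedly splitting products such as $\binom{-a}{t+v+w+x}=\binom{-a}{t}\binom{-a-t}{v}\binom{-a-t-v}{w}\binom{-a-t-v-w}{x}$ (and likewise for the factor coming from $\binom{mn}{t+v+w+x}$), the innermost summation over $x$ assembles into a terminating ${}_{9}F_{8}$ with top row $a+t+v+w,\ m-3,\ -n-3,\ -t-2,\ -t-2,\ -v-1,\ -v-1,\ -w,\ -w$ and bottom row $-mn+t+v+w,\ -t-3,\ -t-3,\ -v-2,\ -v-2,\ -w-1,\ -w-1,\ 1$, and the remaining triple sum over $t,v,w$ of this ${}_{9}F_{8}$ against the residual Pochhammer ratios is, by the very definition of the multiple hypergeometric function, equal to ${}_{[3,5,7,9]}F_{[2,4,6,8]}$ of the displayed array. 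This is the same packaging that yielded ${}_{[3,5]}F_{[2,4]}$ and ${}_{[3,5,7]}F_{[2,4,6]}$ for $k=3$ and $k=4$, with one more nested layer.

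The whole argument is bookkeeping; the one delicate point is getting the four-column Hook Length Formula right --- the six hook-difference factors $(t-v+1)(t-w+2)(t-x+3)(v-w+1)(v-x+2)(w-x+1)$ over $(t+3)!\,(v+2)!\,(w+1)!\,x!$ --- and then tracking the shifts $-t-1,-t-2,-t-3$ and $-v-1,-v-2$ through the binomial-to-Pochhammer conversion so that the innermost sum assembles into precisely the ${}_{9}F_{8}$ above and the outer sums match the definition of ${}_{[3,5,7,9]}F_{[2,4,6,8]}$. That each series involved is genuinely polynomial, so the sums are finite, follows as in the lower cases from the ranges $0\le x\le w\le v\le t\le n$ and the presence of a terminating (non-positive-integer) parameter in every row.
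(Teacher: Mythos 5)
Your proposal is correct and follows essentially the same route as the paper: the same pivot-cell decomposition into bullet and circle tableaux, the same Hook Length Formula computations (your bullet-tableau count agrees with the paper's, with $x$ playing the role of the paper's $r$), the same choice factor $\binom{a+t+v+w+x-1}{t+v+w+x}$, and the same binomial-to-Pochhammer repackaging of the quadruple sum into the stated ${}_{[3,5,7,9]}F_{[2,4,6,8]}$. No substantive differences to report.
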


\begin{proof}
We use the Hook Length Formula.
\begin{enumerate}
\item The number of bullet tableaux is

\[
\frac{\left(t+v+w+r\right)!\left(w-r+1\right)\left(v-w+1\right)\left(v-r+2\right)\left(t-v+1\right)\left(t-w+2\right)\left(t-r+3\right)}{r!\left(w+1\right)!\left(v+2\right)!\left(t+3\right)!}.
\]

\item The number of circle tableaux is

\[
f^{\left(m^{n}\right)}\frac{\left(\begin{array}{c}
t+m-1\\
t
\end{array}\right)\left(\begin{array}{c}
v+m-2\\
v
\end{array}\right)\left(\begin{array}{c}
w+m-3\\
w
\end{array}\right)\left(\begin{array}{c}
n+3\\
r
\end{array}\right)\left(\begin{array}{c}
t+2\\
r
\end{array}\right)\left(\begin{array}{c}
v+1\\
r
\end{array}\right)\left(\begin{array}{c}
w\\
r
\end{array}\right)}{\left(\begin{array}{c}
mn\\
t+v+w+r
\end{array}\right)\left(t+v+w+r\right)!\left(\begin{array}{c}
t+1\\
v
\end{array}\right)}
\]

\[
\frac{\left(\begin{array}{c}
r+m-4\\
r
\end{array}\right)\left(\begin{array}{c}
n\\
t
\end{array}\right)\left(\begin{array}{c}
n+1\\
v
\end{array}\right)\left(\begin{array}{c}
t\\
v
\end{array}\right)\left(\begin{array}{c}
v\\
w
\end{array}\right)}{\left(\begin{array}{c}
t+2\\
w
\end{array}\right)\left(\begin{array}{c}
v+1\\
w
\end{array}\right)\left(\begin{array}{c}
t+3\\
r
\end{array}\right)\left(\begin{array}{c}
v+2\\
r
\end{array}\right)\left(\begin{array}{c}
w+1\\
r
\end{array}\right)}
\]

\end{enumerate}
\[
\left(\begin{array}{c}
a+t+v+w+r-1\\
t+v+w+r
\end{array}\right)\left(\begin{array}{c}
n+2\\
w
\end{array}\right)\left(\begin{array}{c}
t+1\\
w
\end{array}\right)w!t!v!r!.
\]

The total number is therefore

\[
f^{\left(m^{n}\right)}\sum_{t=0}^{n}\sum_{v=0}^{t}\sum_{w=0}^{v}\sum_{r=0}^{w}\frac{\left(w-r+1\right)\left(v-w+1\right)\left(v-r+2\right)\left(t-v+1\right)\left(t-w+2\right)\left(t-r+3\right)}{12\left(\begin{array}{c}
mn\\
t+v+w+r
\end{array}\right)\left(\begin{array}{c}
t+1\\
v
\end{array}\right)\left(\begin{array}{c}
w+1\\
1
\end{array}\right)}
\]

$\frac{\left(\begin{array}{c}
t+m-1\\
t
\end{array}\right)\left(\begin{array}{c}
v+m-2\\
v
\end{array}\right)\left(\begin{array}{c}
w+m-3\\
w
\end{array}\right)\left(\begin{array}{c}
n+3\\
r
\end{array}\right)\left(\begin{array}{c}
t+2\\
r
\end{array}\right)\left(\begin{array}{c}
v+1\\
r
\end{array}\right)\left(\begin{array}{c}
w\\
r
\end{array}\right)}{\left(\begin{array}{c}
v+2\\
2
\end{array}\right)\left(\begin{array}{c}
t+3\\
3
\end{array}\right)\left(\begin{array}{c}
t+2\\
w
\end{array}\right)\left(\begin{array}{c}
v+1\\
w
\end{array}\right)}$

$\frac{\left(\begin{array}{c}
r+m-4\\
r
\end{array}\right)\left(\begin{array}{c}
n\\
t
\end{array}\right)\left(\begin{array}{c}
n+1\\
v
\end{array}\right)\left(\begin{array}{c}
t\\
v
\end{array}\right)\left(\begin{array}{c}
v\\
w
\end{array}\right)\left(\begin{array}{c}
a+t+v+w+r-1\\
t+v+w+r
\end{array}\right)\left(\begin{array}{c}
n+2\\
w
\end{array}\right)\left(\begin{array}{c}
t+1\\
w
\end{array}\right)}{\left(\begin{array}{c}
t+3\\
r
\end{array}\right)\left(\begin{array}{c}
v+2\\
r
\end{array}\right)\left(\begin{array}{c}
w+1\\
r
\end{array}\right)}$

$=f^{\left(m^{n}\right)}(\sum_{t=0}^{n}\sum_{v=0}^{t}\sum_{w=0}^{v}\sum_{r=0}^{w}\frac{\left(\begin{array}{c}
-m\\
t
\end{array}\right)\left(\begin{array}{c}
-m+1\\
v
\end{array}\right)\left(\begin{array}{c}
-m+2\\
w
\end{array}\right)}{\left(\begin{array}{c}
mn-t-v-w\\
r
\end{array}\right)\left(\begin{array}{c}
mn-t-v\\
w
\end{array}\right)\left(\begin{array}{c}
mn-t\\
v
\end{array}\right)}$

$\frac{\left(\begin{array}{c}
n+3\\
r
\end{array}\right)\left(\begin{array}{c}
t+2\\
r
\end{array}\right)^{2}\left(\begin{array}{c}
v+1\\
r
\end{array}\right)^{2}\left(\begin{array}{c}
w\\
r
\end{array}\right)^{2}}{\left(\begin{array}{c}
mn\\
t
\end{array}\right)\left(\begin{array}{c}
t+1\\
v
\end{array}\right)^{2}})$

$(\frac{\left(\begin{array}{c}
-m+3\\
r
\end{array}\right)\left(\begin{array}{c}
n\\
t
\end{array}\right)\left(\begin{array}{c}
n+1\\
v
\end{array}\right)\left(\begin{array}{c}
t\\
v
\end{array}\right)^{2}\left(\begin{array}{c}
v\\
w
\end{array}\right)^{2}\left(\begin{array}{c}
-a-t-v-w\\
r
\end{array}\right)}{\left(\begin{array}{c}
t+2\\
w
\end{array}\right)^{2}\left(\begin{array}{c}
v+1\\
w
\end{array}\right)^{2}\left(\begin{array}{c}
t+3\\
r
\end{array}\right)^{2}\left(\begin{array}{c}
v+2\\
r
\end{array}\right)^{2}\left(\begin{array}{c}
w+1\\
r
\end{array}\right)^{2}}$

$\left(\begin{array}{c}
-a-t-v\\
w
\end{array}\right)\left(\begin{array}{c}
-a-t\\
v
\end{array}\right)\left(\begin{array}{c}
-a\\
t
\end{array}\right)\left(\begin{array}{c}
n+2\\
w
\end{array}\right)\left(\begin{array}{c}
t+1\\
w
\end{array}\right)^{2})$

$=f^{\left(m^{n}\right)}\sum_{t=0}^{n}\sum_{v=0}^{t}\sum_{w=0}^{v}\frac{\left(\begin{array}{c}
-m\\
t
\end{array}\right)\left(\begin{array}{c}
-m+1\\
v
\end{array}\right)\left(\begin{array}{c}
-m+2\\
w
\end{array}\right)}{\left(\begin{array}{c}
mn-t-v\\
w
\end{array}\right)\left(\begin{array}{c}
mn-t\\
v
\end{array}\right)\left(\begin{array}{c}
mn\\
t
\end{array}\right)\left(\begin{array}{c}
t+1\\
v
\end{array}\right)^{2}}$

$\frac{\left(\begin{array}{c}
n\\
t
\end{array}\right)\left(\begin{array}{c}
n+1\\
v
\end{array}\right)\left(\begin{array}{c}
t\\
v
\end{array}\right)^{2}\left(\begin{array}{c}
v\\
w
\end{array}\right)^{2}\left(\begin{array}{c}
-a-t-v\\
w
\end{array}\right)\left(\begin{array}{c}
-a-t\\
v
\end{array}\right)\left(\begin{array}{c}
-a\\
t
\end{array}\right)\left(\begin{array}{c}
n+2\\
w
\end{array}\right)\left(\begin{array}{c}
t+1\\
w
\end{array}\right)^{2}}{\left(\begin{array}{c}
t+2\\
w
\end{array}\right)^{2}\left(\begin{array}{c}
v+1\\
w
\end{array}\right)^{2}}$

$\sum_{r=0}^{w}\frac{\left(\begin{array}{c}
-m+3\\
r
\end{array}\right)\left(\begin{array}{c}
-a-t-v-w\\
r
\end{array}\right)\left(\begin{array}{c}
n+3\\
r
\end{array}\right)\left(\begin{array}{c}
t+2\\
r
\end{array}\right)^{2}\left(\begin{array}{c}
v+1\\
r
\end{array}\right)^{2}\left(\begin{array}{c}
w\\
r
\end{array}\right)^{2}r!}{\left(\begin{array}{c}
t+3\\
r
\end{array}\right)^{2}\left(\begin{array}{c}
v+2\\
r
\end{array}\right)^{2}\left(\begin{array}{c}
w+1\\
r
\end{array}\right)^{2}\left(\begin{array}{c}
mn-t-v-w\\
r
\end{array}\right)\left(\begin{array}{c}
r\\
r
\end{array}\right)}\frac{1}{r!}$

$=f^{\left(m^{n}\right)}\mbox{}_{\left[3,5,7,9\right]}F_{\left[2,4,6,8\right]}$

$\begin{array}{c}
\lceil\\
\brokenvert\\
\brokenvert\\
\lfloor
\end{array}\begin{array}{ccccccccc}
a & m & -n\\
a+t & m-1 & -n-1 & -t & -t\\
a+t+v & m-2 & -n-2 & -t-1 & -t-1 & -v & -v\\
a+t+v+w & m-3 & -n-3 & -t-2 & -t-2 & -v-1 & -v-1 & -w & -w
\end{array}\begin{array}{c}
\brokenvert\\
\brokenvert\\
\brokenvert\\
\brokenvert
\end{array}$

$\begin{array}{cccccccc}
-mn & 1\\
-mn+t & -t-1 & -t-1 & 1\\
-mn+t+v & -t-2 & -t-2 & -v-1 & -v-1 & 1\\
-mn+t+v+w & -t-3 & -t-3 & -v-2 & -v-2 & -w-1 & -w-1 & 1
\end{array};\begin{array}{c}
1\\
1\\
1\\
1
\end{array}\begin{array}{c}
\rceil\\
\brokenvert\\
\brokenvert\\
\rfloor
\end{array}$

\end{proof}
\begin{example}
The number of $SYT$ of battery shape $\left[\left(m^{2}\right),a,5\right]$
is

\[
f^{\left(m^{2}\right)}\frac{\left(a+1\right)\left(a+2\right)\left(a+3\right)\left(a+4\right)w}{46080\left(2m-1\right)\left(2m-3\right)\left(2m-5\right)\left(2m-7\right)}
\]

where

\[
w=a^{4}m\left(m-1\right)\left(m+1\right)\left(m-2\right)+2a^{3}m\left(m+1\right)\left(m-1\right)\left(25m-74\right)
\]

\[
+a^{2}\left(m+1\right)m\left(m-2\right)\left(971m-3131\right)+2a\left(m+1\right)\left(4361m^{3}-29979m^{2}+63418m-40320\right)
\]

\[
+1920\left(2m-1\right)\left(2m-3\right)\left(2m-5\right)\left(2m-7\right)
\]
\end{example}

\subsection{Shape $\left[\left(m^{n}\right),a,6\right]$ }
\begin{thm}
The number of SYT of battery shape $\left[\left(m^{n}\right),a,6\right]$
is equal to 

$f^{\left(m^{n}\right)}\mbox{}_{\left[3,5,7,9,11\right]}F_{\left[2,4,6,8,10\right]}$

$\begin{array}{c}
\lceil\\
\brokenvert\\
\brokenvert\\
\lfloor
\end{array}\begin{array}{cccccc}
a & m & -n\\
a+t & m-1 & -n-1 & -t & -t\\
a+t+v & m-2 & -n-2 & -t-1 & t-1 & -v\\
a+t+v+w & m-3 & -n-3 & -t-2 & -t-2 & -v-1\\
a+t+v+w+r & m-4 & -n-4 & -t-3 & -t-3 & -v-2
\end{array}$

$\begin{array}{cccccc}
 &  &  &  &  & \brokenvert\\
 &  &  &  &  & \brokenvert\\
-v &  &  &  &  & \brokenvert\\
-v-1 & -w & -w &  &  & \brokenvert\\
-v-2 & -w-1 & -w-1 & -r & -r & \brokenvert
\end{array}$

$\begin{array}{c}
\lceil\\
\brokenvert\\
\brokenvert\\
\brokenvert\\
\lfloor
\end{array}\begin{array}{ccccc}
-mn & 1\\
-mn+t & -t-1 & -t-1 & 1\\
-mn+t+v & -t-2 & -t-2 & -v-1 & -v-1\\
-mn+t+v+w & -t-3 & -t-3 & -v-2 & -v-2\\
-mn+t+v+w+r & -t-4 & -t-4 & -v-3 & -v-3
\end{array}$

$\begin{array}{ccccc}
\\
\\
1\\
-w-1 & -w-1 & 1\\
-w-2 & -w-2 & -r-1 & -r-1 & 1
\end{array};\begin{array}{c}
1\\
1\\
1\\
1\\
1
\end{array}\begin{array}{c}
\rceil\\
\brokenvert\\
\brokenvert\\
\brokenvert\\
\rfloor
\end{array}.$
\end{thm}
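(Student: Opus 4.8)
The plan is to extend, with one additional summation index, the argument already carried out for $[(m^n),a,2]$ through $[(m^n),a,5]$. Write $\star$ for the pivot cell, i.e.\ the topmost cell of the sixth column of $[(m^n)]$, which sits immediately below the attached column of length $a$. Summing over the entry $a+\ell$ of $\star$, split the cells of $[(m^n)]$ into the \emph{bullets} (entries $<a+\ell$) and the \emph{circles} (entries $>a+\ell$). Because the extra column is erected over the sixth column, the bullets form a straight Young diagram with at most five columns; let $t\ge v\ge w\ge r\ge s\ge 0$ be the heights of those columns, so that $\ell=t+v+w+r+s$. The circles, after the $180^{\circ}$ rotation and complementation $i\mapsto N+1-i$ used in the proof of the $[(m^n),a,2]$ case, form the straight shape obtained from $(m^n)$ by deleting a staircase block from the northeastern corner, the block governed by the same indices $t,v,w,r,s$.

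First I would count the bullet tableaux by the Hook Length Formula. For a five-column straight shape with column heights $t\ge v\ge w\ge r\ge s$ the hooks simplify, exactly as in the $k=5$ case with one more column, to
\[
\frac{(t+v+w+r+s)!\;\displaystyle\prod_{1\le i<j\le 5}\bigl(c_i-c_j+(j-i)\bigr)}{s!\,(r+1)!\,(w+2)!\,(v+3)!\,(t+4)!},\qquad (c_1,\dots,c_5)=(t,v,w,r,s),
\]
which unpacks to the $(t-v+1)(v-w+1)(w-r+1)(r-s+1)(t-w+2)\cdots(t-s+4)$--type product displayed in the statement. Next I would compute the number of circle tableaux as $f^{(m^n)}$ times a product of binomial coefficients in $m,n,t,v,w,r,s$: deleting the northeastern staircase changes only the hook lengths lying in the last few columns and last few rows of $(m^n)$, so the ratio $f^{\mathrm{circle}}/f^{(m^n)}$ telescopes, just as in the $k=4$ proof. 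Multiplying these two counts, multiplying further by $\binom{a+t+v+w+r+s-1}{t+v+w+r+s}$ (the number of ways to choose which of the $a+\ell-1$ entries below $a+\ell$ occupy the bullet cells, the rest filling the attached column above $\star$), and summing over $0\le s\le r\le w\le v\le t\le n$ yields the number of $SYT$ of $[(m^n),a,6]$.

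Finally I would rewrite every binomial coefficient in the quintuple sum as a Pochhammer symbol, using $\binom{-z+j-1}{j}=(-1)^j\binom{z}{j}$ and $\binom{N}{j}=(N-j+1)_j/j!$. Then the innermost sum, over $s$, is literally a ${}_{11}F_{10}$ whose parameter list is the bottom row of the displayed array evaluated at $z=1$; and the remaining fourfold sum over $t,v,w,r$, carrying the corresponding Pochhammer weights, is by definition the multiple hypergeometric function $_{[3,5,7,9,11]}F_{[2,4,6,8,10]}$ at the stated arguments. The routine ingredients are the Pascal-type identities and staircase hook cancellations already used above; the main obstacle is purely bookkeeping — pinning down the hook-length changes of the five-column bullet shape and of the rotated circle shape so that every parameter emerges correctly, and checking that, for each $j$, the upper rows come out as $a+t+v+\cdots,\ m-j,\ -n-j,\ -t-j,\ -t-j,\ -v-j,\ -v-j,\ -w-j,\ -w-j,\ -r-j,\ -r-j$ and the lower rows as $-mn+t+v+\cdots,\ -t-j,\ -t-j,\ -v-j,\ -v-j,\ -w-j,\ -w-j,\ -r-j,\ -r-j,\ 1$. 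Once the $k=5$ computation is in hand this is a mechanical one-column extension requiring no new ideas.
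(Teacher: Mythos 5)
Your proposal follows essentially the same route as the paper's proof: decompose $[(m^n)]$ into a five-column bullet shape and a $180^{\circ}$-rotated circle shape according to the pivot entry, count each by the Hook Length Formula, multiply by the binomial coefficient choosing which small entries occupy the bullets, and convert the resulting quintuple sum of binomial ratios into Pochhammer form so that the innermost sum is an ${}_{11}F_{10}$ and the whole expression is the stated multiple hypergeometric function. The only (inconsequential) differences are notational: you place the pivot cell at the top of the sixth column of $[(m^n)]$ rather than as the bottom cell of the attached column, and you write $s$ for the paper's fifth index $k$.
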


\begin{proof}
We use the Hook Length Formula.
\begin{enumerate}
\item The number of bullet tableaux is

\[
\frac{\left(t+v+w+r+k\right)!\left(r-k+1\right)\left(w-r+1\right)\left(w-k+2\right)\left(v-w+1\right)\left(v-r+2\right)\left(v-k+3\right)}{k!\left(r+1\right)!\left(w+2\right)!\left(v+3\right)!\left(t+4\right)!}.
\]

$\left(\left(t-v+1\right)\left(t-w+2\right)\left(t-r+3\right)\left(t-k+4\right)\right)$
\begin{enumerate}
\item The number of circle tableaux is
\end{enumerate}
\[
f^{\left(m^{n}\right)}\frac{\left(\begin{array}{c}
n+1\\
v
\end{array}\right)\left(\begin{array}{c}
n+2\\
w
\end{array}\right)\left(\begin{array}{c}
t+1\\
w
\end{array}\right)\left(\begin{array}{c}
n+3\\
r
\end{array}\right)\left(\begin{array}{c}
t+2\\
r
\end{array}\right)\left(\begin{array}{c}
v+1\\
r
\end{array}\right)}{\left(\begin{array}{c}
mn\\
t+v+w+r+k
\end{array}\right)\left(t+v+w+r+k\right)!\left(\begin{array}{c}
t+1\\
v
\end{array}\right)\left(\begin{array}{c}
t+2\\
w
\end{array}\right)\left(\begin{array}{c}
v+1\\
w
\end{array}\right)}
\]

\[
\frac{\left(\begin{array}{c}
n+4\\
k
\end{array}\right)\left(\begin{array}{c}
t+3\\
k
\end{array}\right)\left(\begin{array}{c}
v+2\\
k
\end{array}\right)\left(\begin{array}{c}
w+1\\
k
\end{array}\right)\left(\begin{array}{c}
n\\
t
\end{array}\right)\left(\begin{array}{c}
t\\
v
\end{array}\right)\left(\begin{array}{c}
v\\
w
\end{array}\right)\left(\begin{array}{c}
w\\
r
\end{array}\right)\left(\begin{array}{c}
r\\
k
\end{array}\right)k!r!w!v!t!}{\left(\begin{array}{c}
w+1\\
r
\end{array}\right)\left(\begin{array}{c}
v+2\\
r
\end{array}\right)\left(\begin{array}{c}
t+3\\
r
\end{array}\right)\left(\begin{array}{c}
t+4\\
k
\end{array}\right)\left(\begin{array}{c}
v+3\\
k
\end{array}\right)\left(\begin{array}{c}
w+2\\
k
\end{array}\right)\left(\begin{array}{c}
r+1\\
k
\end{array}\right)}
\]

\[
\left(\begin{array}{c}
t+m-1\\
t
\end{array}\right)\left(\begin{array}{c}
v+m-2\\
v
\end{array}\right)\left(\begin{array}{c}
w+m-3\\
w
\end{array}\right)\left(\begin{array}{c}
r+m-4\\
r
\end{array}\right)\left(\begin{array}{c}
k+m-5\\
k
\end{array}\right)
\]
.

The total number is therefore

$f^{\left(m^{n}\right)}(\sum_{t=0}^{n}\sum_{v=0}^{t}\sum_{w=0}^{v}\sum_{r=0}^{w}\sum_{k=0}^{r}$
\end{enumerate}
\[
\frac{\left(r-k+1\right)\left(w-r+1\right)\left(w-k+2\right)\left(v-w+1\right)\left(v-r+2\right)\left(v-k+3\right)}{\left(\begin{array}{c}
mn\\
t+v+w+r+k
\end{array}\right)\left(\begin{array}{c}
t+1\\
v
\end{array}\right)\left(\begin{array}{c}
t+2\\
w
\end{array}\right)\left(\begin{array}{c}
v+1\\
w
\end{array}\right)}
\]
\[
\frac{\left(t-v+1\right)\left(t-w+2\right)\left(t-r+3\right)\left(t-k+4\right)}{\left(\begin{array}{c}
w+1\\
r
\end{array}\right)\left(\begin{array}{c}
v+2\\
r
\end{array}\right)\left(\begin{array}{c}
t+3\\
r
\end{array}\right)}
\]

$\frac{\left(\begin{array}{c}
t+m-1\\
t
\end{array}\right)\left(\begin{array}{c}
v+m-2\\
v
\end{array}\right)\left(\begin{array}{c}
w+m-3\\
w
\end{array}\right)\left(\begin{array}{c}
r+m-4\\
r
\end{array}\right)\left(\begin{array}{c}
k+m-5\\
k
\end{array}\right)}{\left(\begin{array}{c}
t+4\\
k
\end{array}\right)\left(\begin{array}{c}
v+3\\
k
\end{array}\right)\left(\begin{array}{c}
w+2\\
k
\end{array}\right)\left(\begin{array}{c}
r+1\\
k
\end{array}\right)}$

$\frac{\left(\begin{array}{c}
a+t+v+w+r+k-1\\
t+v+w+r+k
\end{array}\right)\left(\begin{array}{c}
n\\
t
\end{array}\right)\left(\begin{array}{c}
t\\
v
\end{array}\right)\left(\begin{array}{c}
v\\
w
\end{array}\right)\left(\begin{array}{c}
w\\
r
\end{array}\right)\left(\begin{array}{c}
r\\
k
\end{array}\right)}{288\left(\begin{array}{c}
r+1\\
1
\end{array}\right)\left(\begin{array}{c}
w+2\\
2
\end{array}\right)\left(\begin{array}{c}
v+3\\
3
\end{array}\right)\left(\begin{array}{c}
t+4\\
4
\end{array}\right)}$

$\left(\left(\begin{array}{c}
n+1\\
v
\end{array}\right)\left(\begin{array}{c}
n+2\\
w
\end{array}\right)\left(\begin{array}{c}
t+1\\
w
\end{array}\right)\left(\begin{array}{c}
n+3\\
r
\end{array}\right)\left(\begin{array}{c}
t+2\\
r
\end{array}\right)\left(\begin{array}{c}
v+1\\
r
\end{array}\right)\right)$

$\left(\left(\begin{array}{c}
n+4\\
k
\end{array}\right)\left(\begin{array}{c}
t+3\\
k
\end{array}\right)\left(\begin{array}{c}
v+2\\
k
\end{array}\right)\left(\begin{array}{c}
w+1\\
k
\end{array}\right)\right)$

=$f^{\left(m^{n}\right)}(\sum_{t=0}^{n}\sum_{v=0}^{t}\sum_{w=0}^{v}\sum_{r=0}^{w}\sum_{k=0}^{r}$

$\frac{\left(\begin{array}{c}
t+m-1\\
t
\end{array}\right)\left(\begin{array}{c}
v+m-2\\
v
\end{array}\right)\left(\begin{array}{c}
w+m-3\\
w
\end{array}\right)\left(\begin{array}{c}
r+m-4\\
r
\end{array}\right)\left(\begin{array}{c}
k+m-5\\
k
\end{array}\right)}{\left(\begin{array}{c}
mn\\
t+v+w+r+k
\end{array}\right)\left(\begin{array}{c}
t+1\\
v
\end{array}\right)^{2}\left(\begin{array}{c}
t+2\\
w
\end{array}\right)^{2}\left(\begin{array}{c}
v+1\\
w
\end{array}\right)^{2}}$

$\frac{\left(\begin{array}{c}
a+t+v+w+r+k-1\\
t+v+w+r+k
\end{array}\right)\left(\begin{array}{c}
n\\
t
\end{array}\right)\left(\begin{array}{c}
t\\
v
\end{array}\right)^{2}\left(\begin{array}{c}
v\\
w
\end{array}\right)^{2}\left(\begin{array}{c}
w\\
r
\end{array}\right)^{2}\left(\begin{array}{c}
r\\
k
\end{array}\right)^{2}}{\left(\begin{array}{c}
w+1\\
r
\end{array}\right)^{2}\left(\begin{array}{c}
v+2\\
r
\end{array}\right)^{2}\left(\begin{array}{c}
t+3\\
r
\end{array}\right)^{2}}$

$\frac{\left(\begin{array}{c}
n+1\\
v
\end{array}\right)\left(\begin{array}{c}
n+2\\
w
\end{array}\right)\left(\begin{array}{c}
t+1\\
w
\end{array}\right)^{2}\left(\begin{array}{c}
n+3\\
r
\end{array}\right)\left(\begin{array}{c}
t+2\\
r
\end{array}\right)^{2}\left(\begin{array}{c}
v+1\\
r
\end{array}\right)^{2}}{\left(\begin{array}{c}
t+4\\
k
\end{array}\right)^{2}\left(\begin{array}{c}
v+3\\
k
\end{array}\right)^{2}\left(\begin{array}{c}
w+2\\
k
\end{array}\right)^{2}\left(\begin{array}{c}
r+1\\
k
\end{array}\right)^{2}}$

$\left(\begin{array}{c}
n+4\\
k
\end{array}\right)\left(\begin{array}{c}
t+3\\
k
\end{array}\right)^{2}\left(\begin{array}{c}
v+2\\
k
\end{array}\right)^{2}\left(\begin{array}{c}
w+1\\
k
\end{array}\right)^{2}$

=$f^{\left(m^{n}\right)}(\sum_{t=0}^{n}\sum_{v=0}^{t}\sum_{w=0}^{v}\sum_{r=0}^{w}\sum_{k=0}^{r}$

$\frac{\left(\begin{array}{c}
-m\\
t
\end{array}\right)\left(\begin{array}{c}
-m+1\\
v
\end{array}\right)\left(\begin{array}{c}
-m+2\\
w
\end{array}\right)\left(\begin{array}{c}
-m+3\\
r
\end{array}\right)\left(\begin{array}{c}
-m+4\\
k
\end{array}\right)}{\left(\begin{array}{c}
mn-t-v-w-r\\
k
\end{array}\right)\left(\begin{array}{c}
mn-t-v-w\\
r
\end{array}\right)\left(\begin{array}{c}
mn-t-v\\
w
\end{array}\right)\left(\begin{array}{c}
mn-t\\
v
\end{array}\right)}$

$\frac{\left(\begin{array}{c}
-a-t-v-w-r\\
k
\end{array}\right)\left(\begin{array}{c}
-a-t-v-w\\
r
\end{array}\right)\left(\begin{array}{c}
-a-t-v\\
w
\end{array}\right)}{\left(\begin{array}{c}
mn\\
t
\end{array}\right)\left(\begin{array}{c}
t+1\\
v
\end{array}\right)^{2}\left(\begin{array}{c}
t+2\\
w
\end{array}\right)^{2}\left(\begin{array}{c}
v+1\\
w
\end{array}\right)^{2}}$

$\frac{\left(\begin{array}{c}
-a-t\\
v
\end{array}\right)\left(\begin{array}{c}
-a\\
t
\end{array}\right)\left(\begin{array}{c}
n\\
t
\end{array}\right)\left(\begin{array}{c}
t\\
v
\end{array}\right)^{2}\left(\begin{array}{c}
v\\
w
\end{array}\right)^{2}\left(\begin{array}{c}
w\\
r
\end{array}\right)^{2}\left(\begin{array}{c}
r\\
k
\end{array}\right)^{2}}{\left(\begin{array}{c}
w+1\\
r
\end{array}\right)^{2}\left(\begin{array}{c}
v+2\\
r
\end{array}\right)^{2}\left(\begin{array}{c}
t+3\\
r
\end{array}\right)^{2}}$

$\frac{\left(\begin{array}{c}
n+1\\
v
\end{array}\right)\left(\begin{array}{c}
n+2\\
w
\end{array}\right)\left(\begin{array}{c}
t+1\\
w
\end{array}\right)^{2}\left(\begin{array}{c}
n+3\\
r
\end{array}\right)\left(\begin{array}{c}
t+2\\
r
\end{array}\right)^{2}\left(\begin{array}{c}
v+1\\
r
\end{array}\right)^{2}}{\left(\begin{array}{c}
t+4\\
k
\end{array}\right)^{2}\left(\begin{array}{c}
v+3\\
k
\end{array}\right)^{2}\left(\begin{array}{c}
w+2\\
k
\end{array}\right)^{2}\left(\begin{array}{c}
r+1\\
k
\end{array}\right)^{2}}$

$\left(\begin{array}{c}
n+4\\
k
\end{array}\right)\left(\begin{array}{c}
t+3\\
k
\end{array}\right)^{2}\left(\begin{array}{c}
v+2\\
k
\end{array}\right)^{2}\left(\begin{array}{c}
w+1\\
k
\end{array}\right)^{2}$

\break

=$f^{\left(m^{n}\right)}(\sum_{t=0}^{n}\sum_{v=0}^{t}\sum_{w=0}^{v}\sum_{r=0}^{w}$

$\frac{\left(\begin{array}{c}
n+1\\
v
\end{array}\right)\left(\begin{array}{c}
n+2\\
w
\end{array}\right)\left(\begin{array}{c}
t+1\\
w
\end{array}\right)^{2}\left(\begin{array}{c}
n+3\\
r
\end{array}\right)\left(\begin{array}{c}
t+2\\
r
\end{array}\right)^{2}\left(\begin{array}{c}
v+1\\
r
\end{array}\right)^{2}}{\left(\begin{array}{c}
mn-t-v-w\\
r
\end{array}\right)\left(\begin{array}{c}
mn-t-v\\
w
\end{array}\right)\left(\begin{array}{c}
mn-t\\
v
\end{array}\right)\left(\begin{array}{c}
mn\\
t
\end{array}\right)\left(\begin{array}{c}
t+1\\
v
\end{array}\right)^{2}\left(\begin{array}{c}
t+2\\
w
\end{array}\right)^{2}\left(\begin{array}{c}
v+1\\
w
\end{array}\right)^{2}}$

$\frac{\left(\begin{array}{c}
-a-t-v-w\\
r
\end{array}\right)\left(\begin{array}{c}
-a-t-v\\
w
\end{array}\right)\left(\begin{array}{c}
-a-t\\
v
\end{array}\right)\left(\begin{array}{c}
-a\\
t
\end{array}\right)\left(\begin{array}{c}
n\\
t
\end{array}\right)\left(\begin{array}{c}
t\\
v
\end{array}\right)^{2}\left(\begin{array}{c}
v\\
w
\end{array}\right)^{2}\left(\begin{array}{c}
w\\
r
\end{array}\right)^{2}}{\left(\begin{array}{c}
w+1\\
r
\end{array}\right)^{2}\left(\begin{array}{c}
v+2\\
r
\end{array}\right)^{2}\left(\begin{array}{c}
t+3\\
r
\end{array}\right)^{2}}$

$\left(\begin{array}{c}
-m\\
t
\end{array}\right)\left(\begin{array}{c}
-m+1\\
v
\end{array}\right)\left(\begin{array}{c}
-m+2\\
w
\end{array}\right)\left(\begin{array}{c}
-m+3\\
r
\end{array}\right)$

$\sum_{k=0}^{r}\frac{\left(\begin{array}{c}
-m+4\\
k
\end{array}\right)\left(\begin{array}{c}
-a-t-v-w-r\\
k
\end{array}\right)\left(\begin{array}{c}
r\\
k
\end{array}\right)^{2}\left(\begin{array}{c}
n+4\\
k
\end{array}\right)\left(\begin{array}{c}
t+3\\
k
\end{array}\right)^{2}\left(\begin{array}{c}
v+2\\
k
\end{array}\right)^{2}\left(\begin{array}{c}
w+1\\
k
\end{array}\right)^{2}k!}{\left(\begin{array}{c}
mn-t-v-w-r\\
k
\end{array}\right)\left(\begin{array}{c}
t+4\\
k
\end{array}\right)^{2}\left(\begin{array}{c}
v+3\\
k
\end{array}\right)^{2}\left(\begin{array}{c}
w+2\\
k
\end{array}\right)^{2}\left(\begin{array}{c}
r+1\\
k
\end{array}\right)^{2}\left(\begin{array}{c}
k\\
k
\end{array}\right)}\frac{1}{k!}$

$=f^{\left(m^{n}\right)}\mbox{}_{\left[3,5,7,9,11\right]}F_{\left[2,4,6,8,10\right]}$

$\begin{array}{c}
\lceil\\
\brokenvert\\
\brokenvert\\
\lfloor
\end{array}\begin{array}{cccccc}
a & m & -n\\
a+t & m-1 & -n-1 & -t & -t\\
a+t+v & m-2 & -n-2 & -t-1 & t-1 & -v\\
a+t+v+w & m-3 & -n-3 & -t-2 & -t-2 & -v-1\\
a+t+v+w+r & m-4 & -n-4 & -t-3 & -t-3 & -v-2
\end{array}$

$\begin{array}{cccccc}
 &  &  &  &  & \brokenvert\\
 &  &  &  &  & \brokenvert\\
-v &  &  &  &  & \brokenvert\\
-v-1 & -w & -w &  &  & \brokenvert\\
-v-2 & -w-1 & -w-1 & -r & -r & \brokenvert
\end{array}$

$\begin{array}{c}
\lceil\\
\brokenvert\\
\brokenvert\\
\brokenvert\\
\lfloor
\end{array}\begin{array}{ccccc}
-mn & 1\\
-mn+t & -t-1 & -t-1 & 1\\
-mn+t+v & -t-2 & -t-2 & -v-1 & -v-1\\
-mn+t+v+w & -t-3 & -t-3 & -v-2 & -v-2\\
-mn+t+v+w+r & -t-4 & -t-4 & -v-3 & -v-3
\end{array}$

$\begin{array}{ccccc}
\\
\\
1\\
-w-1 & -w-1 & 1\\
-w-2 & -w-2 & -r-1 & -r-1 & 1
\end{array};\begin{array}{c}
1\\
1\\
1\\
1\\
1
\end{array}\begin{array}{c}
\rceil\\
\brokenvert\\
\brokenvert\\
\brokenvert\\
\rfloor
\end{array}.$
\end{proof}

The following example answers an enumerative chess problem asked by
Buchanan \cite{Bu} .

\break
\begin{example}
The number of SYT of battery shape $\left[\left(11^{7}\right),1,6\right]$

as in the figure 

$\left(\begin{array}{ccccccccccc}
 &  &  &  &  & \bullet\\
\bullet & \bullet & \bullet & \bullet & \bullet & \bullet & \bullet & \bullet & \bullet & \bullet & \bullet\\
\bullet & \bullet & \bullet & \bullet & \bullet & \bullet & \bullet & \bullet & \bullet & \bullet & \bullet\\
\bullet & \bullet & \bullet & \bullet & \bullet & \bullet & \bullet & \bullet & \bullet & \bullet & \bullet\\
\bullet & \bullet & \bullet & \bullet & \bullet & \bullet & \bullet & \bullet & \bullet & \bullet & \bullet\\
\bullet & \bullet & \bullet & \bullet & \bullet & \bullet & \bullet & \bullet & \bullet & \bullet & \bullet\\
\bullet & \bullet & \bullet & \bullet & \bullet & \bullet & \bullet & \bullet & \bullet & \bullet & \bullet\\
\bullet & \bullet & \bullet & \bullet & \bullet & \bullet & \bullet & \bullet & \bullet & \bullet & \bullet
\end{array}\right).$

is equal to:

$2^{5}\times3^{2}\times5^{2}\times11\times13\times17^{2}\times19^{3}\times23^{2}\times29\times31$$\times37^{2}\times41\times3361178017$

$\times2839893182041.$
\end{example}

\newpage

\addcontentsline{toc}{section}{References}

\end{document}